\DeclareSymbolFont{rsfscript}{OMS}{rsfs}{m}{n}
\DeclareSymbolFontAlphabet{\mathrsfs}{rsfscript}
\newtheorem{theorem}{Theorem}[section]
\newtheorem{proposition}[theorem]{Proposition}
\newtheorem{lemma}[theorem]{Lemma}
\newtheorem{corollary}[theorem]{Corollary}
\theoremstyle{remark}
\newtheorem{remark}{Remark}
\newcommand{\ais}{ai-semi\-ring}
\def\softd{{\leavevmode\setbox1=\hbox{d}%
    \hbox to 1.05\wd1{d\kern-0.4ex{\char039}\hss}}}
\numberwithin{equation}{section}
\renewcommand*\subjclass[2][2010]{\def\@subjclass{#2}\@ifundefined{subjclassname@#1}{\ClassWarning{\@classname}{Unknown edition (#1) of Mathematics Subject Classification; using '2010'.}}{\@xp\let\@xp\subjclassname\csname subjclassname@#1\endcsname}}
\renewcommand{\subjclassname}{\textup{2010} Mathematics Subject Classification}
\begin{document}

\title[Semiring and involution identities of power groups]{Semiring and involution identities of power groups}
\thanks{Supported by the Russian Science Foundation (grant No. 22-21-00650).}

\author[S. V. Gusev]{Sergey V. Gusev}
\address{Institute of Natural Sciences and Mathematics\\
Ural Federal University\\ 620000 Ekaterinburg, Russia}
\email{sergey.gusb@gmail.com}
\email{m.v.volkov@urfu.ru}

\author[M. V. Volkov]{Mikhail V. Volkov}

\begin{abstract}
For every group $G$, the set $\mathcal{P}(G)$ of its subsets forms a semiring under set-theoretical union $\cup$ and element-wise multiplication $\cdot$ and forms an involution semigroup under $\cdot$ and element-wise inversion ${}^{-1}$. We show that if the group $G$ is finite, non-Dedekind, and solvable, neither the semiring $(\mathcal{P}(G),\cup,\cdot)$ nor the involution semigroup $(\mathcal{P}(G),\cdot,{}^{-1})$ admits a finite identity basis. We also solve the finite basis problem for the semiring of Hall relations over any finite set.

\end{abstract}

\keywords{Additively idempotent semiring, Finite basis problem, Power semiring, Power group, Block-group, Hall relation, Brandt monoid, Involution semigroup}

\subjclass{16Y60, 20M18, 08B05}

\maketitle

\section{Introduction}
\label{sec:introduction}

We assume the reader's acquaintance with the concepts of an algebra of type $\tau$ and an identity of type $\tau$; see, e.g., \cite{BuSa81}. We use expressions like $A:=B$ to emphasize that $A$ is defined to be $B$.

An \emph{additively idempotent semiring} (\ais, for short) is an algebra $\mathcal{S}=(S, +, \cdot)$ of type $(2,2)$ whose operations are associative and fulfil the distributive laws
\[
x(y+z)=xy+xz\ \text{ and }\ (y+z)x=yx+zx,
\]
and besides that, addition is commutative and fulfils the idempotency law $x+x=x$. We refer to the semigroup $(S,\cdot)$ as to the \emph{multiplicative reduct} of $\mathcal{S}$.

Examples of \ais{}s are plentiful and include many objects of importance for computer science, idempotent analysis, tropical geometry, and algebra such as, e.g., semirings of binary relations \cite{Jip17}, syntactic semirings of languages \cite{Polak01,Polak03}, tropical semirings \cite{Pin98}, endomorphism semirings of semilattices \cite{JKM09}. A natural family of \ais{}s comes from the powerset construction applied to an arbitrary semigroup. Namely, for any semigroup $(S,\cdot)$, one can multiply its subsets element-wise: for any $A,B\subseteq S$, put
\[
A\cdot B:=\{ab\mid a\in A,\ b\in B\}.
\]
It is known and easy to verify that this multiplication distributes over the set-theoretical union of subsets of $S$. Thus, denoting the powerset of $S$ by $\mathcal{P}(S)$, we see that $(\mathcal{P}(S),\cup,\cdot)$ is an \ais, called the \emph{power semiring} of $(S,\cdot)$. In semigroup theory, there exists a vast literature on \emph{power semigroups}, i.e., the multiplicative reducts of power semirings. The interest in power semigroups is mainly motivated by their applications in the algebraic theory of regular languages; see \cite[Chapter 11]{Almeida:95} and references therein. Power semirings have attracted less attention so far, but we think they deserve study no less than power semigroups as $(\mathcal{P}(S),\cup,\cdot)$ carries more information than its multiplicative reduct $(\mathcal{P}(S),\cdot)$. (For instance, it is known that in general, the power semigroup $(\mathcal{P}(S),\cdot)$ does not determine its `parent' semigroup $(S,\cdot)$ up to isomorphism \cite{Mog72,Mog73} while it is easy to see that the power semiring $(\mathcal{P}(S),\cup,\cdot)$ does.)

The power semigroups of finite groups have quite a specific structure discussed in detail in \cite[Section 11.1]{Almeida:95}. In particular, they satisfy the following implications:
\begin{gather}
ef=e^2=e\mathrel{\&}fe=f^2=f\to e=f,\label{eq:bg1}\\
ef=f^2=f\mathrel{\&}fe=e^2=e\to e=f.\label{eq:bg2}
\end{gather}
Semigroups satisfying \eqref{eq:bg1} and \eqref{eq:bg2} are called \emph{block-groups}. We refer the reader to Pin's survey~\cite{Pin:1995} for an explanation of the name `block-group' and an overview of the remarkable role played by finite block-groups in the theory of regular languages. In this paper, we deal with power semirings of finite groups, which we call \emph{power groups} for brevity, and more broadly, with \ais{}s whose multiplicative reducts are block-groups. We address the finite axiomatizability question (aka the Finite Basis Problem) for semiring identities satisfied by such \ais{}s. For power groups, we prove the following.

\begin{theorem}
\label{thm:psg}
For any finite non-abelian solvable group $(G,\cdot)$, the identities of the power group $(\mathcal{P}(G),\cup,\cdot)$ admit no finite basis.
\end{theorem}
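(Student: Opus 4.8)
The plan is to prove non-finite-axiomatizability by exhibiting a sequence of ``almost-identities'' that separate the variety generated by the power group from any finitely based variety containing it. The standard tool for such results is a syntactic invariant together with a family of finite \ais{}s $\mathcal{S}_n$ that each satisfy all identities of $(\mathcal{P}(G),\cup,\cdot)$ of ``complexity'' below $n$ (say, identities in fewer than $n$ variables, or of bounded term depth), yet fail some specific identity $u_n\approx v_n$ that genuinely holds in $(\mathcal{P}(G),\cup,\cdot)$. If such $\mathcal{S}_n$ exist for arbitrarily large $n$, then no finite set of identities valid in $(\mathcal{P}(G),\cup,\cdot)$ can derive all the $u_n\approx v_n$, and the finite basis fails. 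So the first step is to locate the source of non-finite-basedness inside the multiplicative structure.

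The key structural observation I would exploit is that a finite non-abelian group contains a pair of non-commuting elements, and hence $(\mathcal{P}(G),\cdot)$ contains, as a subsemigroup or divisor, a copy of a multiplicatively ``bad'' object. In the theory of finite block-groups and power semigroups, the natural culprit is the Brandt monoid $B_2^1$ (the $5$-element Brandt semigroup with an identity adjoined), which is the canonical witness for non-finite axiomatizability in many semigroup settings. Concretely, I would first show that for a finite non-abelian group $G$, the power semiring $(\mathcal{P}(G),\cup,\cdot)$ has the \ais{} associated with $B_2^1$ (equipped with union as addition) in its generated variety, or at least inherits the relevant identity-separating family from it. The solvability hypothesis should enter here to guarantee enough internal structure --- via a chain of subgroups or a minimal non-abelian section --- so that one can embed the relevant finite semirings and control which identities hold. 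This reduction, turning a statement about all finite non-abelian solvable groups into a statement about a single universal obstruction, is what makes the theorem tractable.

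The heart of the argument is then the construction of the separating identities $u_n\approx v_n$ and the finite \ais{}s $\mathcal{S}_n$ refuting them. I would adapt the classical Brandt-monoid non-finite-basis machinery (in the spirit of Perkins's proof for $B_2^1$): take words $u_n,v_n$ built from an alternating pattern in variables $x_1,\dots,x_n$ whose associated ``substitution graphs'' or ``content'' data coincide up to an isolated defect detectable only when all $n$ variables interact. One verifies that $u_n\approx v_n$ holds in $(\mathcal{P}(G),\cup,\cdot)$ by a direct computation in the power group (using that sets multiply element-wise and union is idempotent, so the identity reduces to an inclusion-of-subsets check). Simultaneously one builds $\mathcal{S}_n$ as a finite \ais{} --- typically a Rees-matrix-type construction or a powerset of a carefully truncated partial structure --- satisfying every identity of $(\mathcal{P}(G),\cup,\cdot)$ in at most $n-1$ variables while violating $u_n\approx v_n$ under a specific evaluation.

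The main obstacle I anticipate is the verification side of this dichotomy: proving that each $\mathcal{S}_n$ really does satisfy \emph{all} low-complexity identities of the power group, not merely the obvious semigroup identities. Unlike the pure semigroup case, here one must control the interaction of union with the element-wise product, and an \ais{} has strictly more identities than its multiplicative reduct, so the counterexample semirings must be engineered with their additive structure in mind --- a naive adjunction of addition to a semigroup counterexample will typically satisfy too few identities. Handling the additively idempotent layer correctly, and ensuring the solvability hypothesis supplies exactly the group-theoretic input needed to make $u_n\approx v_n$ a genuine power-group identity, is where the delicate work lies; the surrounding semiring bookkeeping, by contrast, I expect to be routine once the right family $u_n\approx v_n$ and the right truncation producing $\mathcal{S}_n$ are identified.
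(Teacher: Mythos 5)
Your proposal captures the right general flavor --- a family of finite \ais{}s that satisfy all low-variable identities of the power group yet violate a genuine identity, with the Brandt monoid as the ultimate obstruction --- but it has two genuine gaps. The first is your reduction step: ``a finite non-abelian group has non-commuting elements, hence the \ais\ $\mathcal{B}_2^1$ lies in the variety generated by $(\mathcal{P}(G),\cup,\cdot)$.'' This is not justified, and the paper's proof shows why it cannot be that simple. What produces $\mathcal{B}_2^1$ as a homomorphic image of a subsemiring of $(\mathcal{P}(G),\cup,\cdot)$ is not non-commutativity but the existence of a \emph{non-normal} subgroup $H$: one takes $B:=\{E,\,H,\,g^{-1}H,\,Hg,\,g^{-1}Hg\}\cup\{A\subseteq G:|A|>|H|\}$ with $g^{-1}Hg\ne H$ and maps it onto $\mathcal{B}_2^1$. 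For a non-abelian \emph{Dedekind} group (all subgroups normal, e.g.\ any group containing the quaternion group $Q_8$ in this way) that construction is unavailable, and the paper handles this case by an entirely different argument: such a group contains a copy of $Q_8$, and by a theorem of Jackson, Ren, and Zhao \cite{JackRenZhao} any \ais\ whose multiplicative reduct has a non-abelian nilpotent subgroup is non-finitely based. Your plan funnels every non-abelian solvable $G$ through $\mathcal{B}_2^1$ and thereby silently skips a case split that the paper's proof cannot avoid.

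The second gap is that, even in the non-Dedekind case, neither half of your dichotomy is carried out by the devices you name. The separating identities are not ad hoc Perkins-style words checked ``by an inclusion-of-subsets computation''; they are the semigroup identities $\mathbf v_{n,q}^{(r)}=(\mathbf v_{n,q}^{(r)})^2$, and proving that $(\mathcal{P}(G),\cdot)$ satisfies them is the technical core of the paper: one needs Pin's theorem that $(\mathcal{P}(G),\cdot)$ is a block-group whose subgroups are quotients $N_G(H)/H$ (hence solvable), the Brandt-series decomposition of finite block-groups, and a double induction on series height and derived length (Proposition~\ref{prop:finbg}). This is exactly where solvability enters --- not in ``embedding the relevant semirings.'' Likewise, the counterexample algebras are not engineered from scratch: they come from Ka\softd{}ourek's inverse semigroups \cite{Kad03}, whose key property (every subsemigroup generated by fewer than $n$ elements satisfies all identities of $B_2^1$) is precisely the tool that verifies the ``low-complexity identities'' condition you correctly flag as the main obstacle but offer no means to overcome; this packaging is Theorem~4.2 of \cite{GV22}, quoted here as Theorem~\ref{thm:old} and applied through Theorem~\ref{thm:main}. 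As it stands, your proposal is a strategy statement in which the two hardest ingredients --- the Dedekind/non-Dedekind case analysis and the verification machinery --- are missing.
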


In 2009, Dolinka \cite[Problem 6.5]{Dolinka09a} asked for a description of finite groups whose power groups have no finite identity basis (provided such finite groups exist, which was unknown at that time). Theorem~\ref{thm:psg} gives a partial answer to Dolinka's question.

Theorem~\ref{thm:psg} is a consequence of quite a general condition ensuring the absence of a finite identity basis for \ais{}s whose multiplicative reducts are block-groups with solvable subgroups (see Theorem~\ref{thm:main} below). Yet another application of this condition, combined with a result from \cite{JackRenZhao}, gives a complete solution to the Finite Basis Problem for a natural family of semirings of binary relations. Recall that a \emph{Hall relation} on a finite set $X$ is a binary relation that contains all pairs $(x,x\pi)$, $x\in X$, for a permutation $\pi\colon X\to X$. Clearly, the set $H(X)$ of all Hall relations on $X$ is closed under unions and products of relations, and hence $(H(X),\cup,\cdot)$ is a subsemiring of the $(\cup,\cdot)$-semiring of all binary relations on $X$.

\begin{theorem}
\label{thm:hall}
The identities of the \ais\ of all Hall relations on a finite set $X$ admit a finite basis if and only if $|X|=1$.
\end{theorem}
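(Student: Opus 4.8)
The plan is to treat Theorem~\ref{thm:hall} as essentially two separate assertions: the easy direction $|X|=1$, and the substantive direction asserting non-finite-axiomatizability whenever $|X|\geq 2$. The case $|X|=1$ is trivial: a Hall relation on a one-element set is just the full relation, so $(H(X),\cup,\cdot)$ is a one-element \ais, whose identities are obviously finitely based. All the work lies in showing that for $|X|\geq 2$, the \ais\ $(H(X),\cup,\cdot)$ has no finite identity basis.

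My first step for the hard direction would be to connect Hall relations to power groups so as to bring Theorem~\ref{thm:main} (the general criterion cited just before this statement, applicable to \ais{}s whose multiplicative reducts are block-groups with solvable subgroups) into play. A Hall relation, by definition, contains the graph of a permutation of $X$; the natural guess is that the multiplicative reduct $(H(X),\cdot)$ is a block-group, or at least sits inside one in a variety-theoretic sense. I would verify that $(H(X),\cdot)$ satisfies the implications \eqref{eq:bg1} and \eqref{eq:bg2}, i.e.\ is a block-group, by analyzing its idempotents: an idempotent Hall relation should turn out to be an equivalence-like or partial-order-like relation whose structure forces \eqref{eq:bg1} and \eqref{eq:bg2}. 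I would also identify the subgroups of this block-group; since Hall relations are built from permutations of the finite set $X$, the maximal subgroups should be symmetric groups (or subgroups thereof), which are certainly solvable when $|X|\leq 4$ but \emph{not} when $|X|\geq 5$. This is where the cited result from \cite{JackRenZhao} must enter: presumably that result handles the non-solvable range (large $|X|$) or otherwise supplies the structural input that Theorem~\ref{thm:main} alone cannot cover, so the overall argument splits according to whether the relevant subgroups are solvable.

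The core of the argument is therefore to exhibit, inside the \ais\ $(H(X),\cup,\cdot)$ for $2\leq|X|$, a configuration to which Theorem~\ref{thm:main} applies. Concretely, I expect the strategy to be: show that the power group $(\mathcal{P}(G),\cup,\cdot)$ of some suitable finite non-Dedekind (equivalently, for small groups, non-abelian) solvable group $G$ embeds into, or is a homomorphic image of a subalgebra of, $(H(X),\cup,\cdot)$ for appropriate $X$, and then transfer non-finite-axiomatizability. The cleanest realization would be to take $G$ to be a group acting faithfully on $X$ (e.g.\ via a regular or natural permutation representation) so that $\mathcal{P}(G)$ is recovered as the sub-\ais\ of $H(X)$ generated by the graphs of the elements of $G$ together with their unions. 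Since non-finite-axiomatizability is inherited by \ais{}s whose identities coincide with, or are determined by, those of a non-finitely-based \ais\ in the relevant variety, Theorem~\ref{thm:psg} (for the solvable range) together with \cite{JackRenZhao} (for the remaining range) would then yield the conclusion.

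The main obstacle I anticipate is the transfer step: establishing that the failure of finite axiomatizability for power groups actually \emph{propagates} to $H(X)$. Embeddings alone do not transfer non-finite-basis properties in general, so I would need to argue at the level of identities or varieties—for instance, showing that the variety generated by $(H(X),\cup,\cdot)$ contains a copy of the relevant power group \emph{and} that the identities witnessing non-finite-axiomatizability (the infinite sequence of identities that Theorem~\ref{thm:main} produces) remain valid and remain independent within $H(X)$. Verifying that these witness identities hold in $H(X)$ while no finite subset of the identities of $H(X)$ implies them all is the delicate point; it will likely require checking that the specific relations encoding the block-group structure (idempotents and the solvable subgroups) are realized by genuine Hall relations, which is exactly where the permutation-graph hypothesis in the definition of a Hall relation does the work.
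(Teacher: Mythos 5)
Your easy direction and your overall split (Theorem~\ref{thm:main} for the solvable range, \cite{JackRenZhao} for large $|X|$) match the paper, but the core of your hard direction has a genuine gap, and it is precisely the ``transfer step'' you flag as the main obstacle. You propose to embed a power group $(\mathcal{P}(G),\cup,\cdot)$ of a non-Dedekind solvable group into $(H(X),\cup,\cdot)$ and then transfer non-finite-axiomatizability. As you yourself note, subalgebra embeddings do not transfer the non-finite-basis property, and your suggested repair (checking that the witnessing identities ``remain valid and remain independent within $H(X)$'') is not an argument but a restatement of what needs proving. This route is both unsound as stated and unnecessary.

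The missing idea is that Theorem~\ref{thm:main} applies \emph{directly} to the \ais\ of Hall relations itself; no power group intermediary and no transfer of non-finite-basedness is involved. The only embedding needed goes in the opposite direction and involves a much smaller algebra: one exhibits the 6-element Brandt \ais\ $\mathcal{B}_2^1$ as a subalgebra of $(H(X_2),\cup,\cdot)$ (here $X_n$ denotes an $n$-element set) by an explicit map onto six Boolean matrices representing Hall relations, and since $(H(X_2),\cup,\cdot)$ embeds into $(H(X_n),\cup,\cdot)$ for every $n\ge 2$, every identity of $(H(X_n),\cup,\cdot)$ holds in $\mathcal{B}_2^1$ --- identities pass down to subalgebras, which is exactly the hypothesis of Theorem~\ref{thm:main}. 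The remaining hypotheses are verified by citation rather than by your from-scratch analysis: $(H(X_n),\cdot)$ is a block-group \cite{GV21}, and all its subgroups embed into $\mathrm{Sym}_n$ \cite{MP69}, hence are solvable for $n\le 4$. Thus Theorem~\ref{thm:main} settles $n=2,3,4$ outright, while for $n\ge 4$ the group of units $\mathrm{Sym}_n$ contains the 8-element dihedral group, a non-abelian nilpotent subgroup, so \cite[Theorem~6.1]{JackRenZhao} finishes the proof. Your instinct that the permutation structure of Hall relations ``does the work'' is right, but it does so through this concrete $\mathcal{B}_2^1$-embedding, not through realizing an entire power group inside $H(X)$.
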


Our techniques apply equally well to yet another algebra carried by the powerset of a group. An \emph{involution semigroup} is an algebra of type $(2,1)$ whose binary operation is associative and whose unary operation $x\mapsto x^*$ fulfils the laws
\[
(xy)^*=y^*x^*\ \text{ and }\ (x^*)^*=x.
\]
If $(G,\cdot)$ is a group, each $g\in G$ has the inverse $g^{-1}$ and the map $g\mapsto g^{-1}$ naturally extends to the powerset of $G$: for each $A\in\mathcal{P}(G)$, let $A^{-1}:=\{g^{-1}\mid g\in A\}$. Clearly, $(\mathcal{P}(G),\cdot,{}^{-1})$ forms an involution semigroup. The Finite Basis Problem for involution semigroups is relatively well studied (see, e.g., \cite{Dolinka10,ADV12,ADV12a,Lee17}), but involution power semigroups of groups do not fall in any previously studied class. Our next result applies to involution power semigroups of finite solvable groups containing at least one non-normal subgroup. Recall that a group all of whose subgroups are normal is called a \emph{Dedekind} group.

\begin{theorem}
\label{thm:inv-psg}
For any finite non-Dedekind solvable group $(G,\cdot)$, the identities of the involution semigroup $(\mathcal{P}(G),\cdot,{}^{-1})$ admit no finite basis.
\end{theorem}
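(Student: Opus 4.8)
The plan is to derive Theorem~\ref{thm:inv-psg} from the same general nonfinite-axiomatizability machinery that yields Theorem~\ref{thm:psg}, transposing it from the semiring signature $(\cup,\cdot)$ to the involution signature $(\cdot,{}^{-1})$. The crucial structural observation is that the set-theoretical union is term-definable, or at least identity-controllable, inside the involution power semigroup: for singletons and small subsets one cannot recover $\cup$ directly, but the essential combinatorial content of the nonfinite-basis proof lives in the multiplicative reduct, which is a block-group, and in a family of increasingly long identities that fail in a uniform way. So before anything else I would isolate exactly which feature of the semiring argument is genuinely a $(\cup,\cdot)$-phenomenon and which is really a statement about the multiplicative reduct together with the involution.

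Concretely, I would proceed as follows. First, reduce to the smallest obstruction: since $(G,\cdot)$ is finite, non-Dedekind, and solvable, it contains a non-normal subgroup, and by taking an appropriate section one can arrange a copy of a Brandt-like or $B_2$-type configuration inside $(\mathcal{P}(G),\cdot,{}^{-1})$. The block-group implications \eqref{eq:bg1} and \eqref{eq:bg2} are what force the relevant idempotents to behave rigidly, and the involution $A\mapsto A^{-1}$ interchanges the two implications, so I would exploit that symmetry to show $(\mathcal{P}(G),\cdot,{}^{-1})$ contains, as a sub-involution-semigroup or retract, a fixed finite involution semigroup $\mathcal{B}$ that is already known (by the cited involution-semigroup literature, e.g.\ \cite{Dolinka10,ADV12}) to be non-finitely-based. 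Second, I would construct an explicit infinite sequence of involution identities $w_n=w_n'$ that hold in $(\mathcal{P}(G),\cdot,{}^{-1})$ but such that, for each $n$, some identity of length bounded by a function of $n$ cannot be derived from any fixed finite subset of the identities of the power semigroup; the non-normality of the chosen subgroup is what produces a witness subset whose inverse behaves differently under the two sides of $w_n=w_n'$.

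The key steps, in order, are therefore: (i) locate the critical section of $G$ responsible for non-Dedekindness and describe the corresponding subalgebra of $(\mathcal{P}(G),\cdot,{}^{-1})$ generated by the relevant cosets and their inverses; (ii) verify that the general Theorem~\ref{thm:main} applies, checking that its hypotheses (multiplicative reduct a block-group with solvable subgroups) are met and that the theorem is stated, or can be re-stated, for the involution signature rather than only the semiring signature; (iii) transfer the failure-of-derivability from the semiring identities to the involution identities by exhibiting, for each candidate finite basis $\Sigma$, a homomorphic image or substitution instance on which some $w_n=w_n'$ fails while all identities of $\Sigma$ survive; and (iv) conclude by the standard compactness argument that no finite $\Sigma$ can axiomatize the involution identities.

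The main obstacle I expect is step~(iii), the signature transfer. In the semiring case the union $\cup$ supplies a handle for separating subsets by size or by a chosen element, and the nonfinite-basis identities are naturally written using $+$; in the involution case one has only $\cdot$ and ${}^{-1}$, so I must either (a) show that the discriminating identities can be rewritten purely multiplicatively with inversion, trading each use of $\cup$ for an inversion-based gadget that detects the same coset incidence, or (b) argue more abstractly that the variety generated by $(\mathcal{P}(G),\cdot,{}^{-1})$ inherits non-finite-axiomatizability because it contains a non-finitely-based finite involution semigroup as a member whose identities it cannot finitely separate. Option~(b) is cleaner if the relevant finite involution semigroup is available from the literature, so I would first try to identify such a semigroup (a natural candidate is an involution-equipped Brandt monoid $B_2^*$) and reduce everything to citing its known non-finite-basis property together with Theorem~\ref{thm:main}; failing that, I would develop the explicit Rees-matrix or partition-based argument of option~(a), which is where the bulk of the routine but delicate computation would reside.
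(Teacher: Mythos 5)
Your reduction steps coincide with the paper's: choose a non-normal subgroup $(H,\cdot)$ and $g\in G$ with $g^{-1}Hg\ne H$, form the subset $B$ of \eqref{eq:subsetb} from the cosets $E,H,g^{-1}H,Hg,g^{-1}Hg$ and all large subsets, observe that $B$ is closed under element-wise inversion (the involution fixes $E$, $H$, $g^{-1}Hg$ and swaps $g^{-1}H$ with $Hg$), and map $(B,\cdot,{}^{-1})$ onto the Brandt monoid $(B_2^1,\cdot,{}^{-1})$ via \eqref{eq:map}; moreover, the multiplicative reduct $(\mathcal{P}(G),\cdot)$ is a block-group with solvable subgroups, so Proposition~\ref{prop:finbg} supplies the identities $\mathbf v_{n,q}^{(r)}=(\mathbf v_{n,q}^{(r)})^2$. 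All of this is exactly what the paper does.

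The genuine gap is the step you yourself flag as the main obstacle: producing an involution-signature version of the general nonfinite-basis theorem. Theorem~\ref{thm:main} is a statement about \ais{}s and cannot be applied as is; the paper instead proves Theorem~\ref{thm:inv} from scratch. Your preferred route (b) -- cite that $(B_2^1,\cdot,{}^{-1})$ is non-finitely based and let $(\mathcal{P}(G),\cdot,{}^{-1})$ ``inherit'' this because that monoid lies in its variety -- is logically invalid: the identities of a generator form a \emph{subset} of the identities of any member of its variety, so a finite basis for the generator implies nothing about the member, and absence of a finite basis does not pass upward from a member to the generator. The only standard mechanism by which mere containment forces nonfinite basedness is \emph{inherent} nonfinite basedness, and that mechanism is provably unavailable here: Dolinka \cite[Theorem~6]{Dolinka10a} showed that $(\mathcal{P}(G),\cdot,{}^{-1})$ is weakly finitely based for \emph{every} finite group $(G,\cdot)$, so no member of its variety (in particular, no involution Brandt monoid inside it) is inherently nonfinitely based -- this is precisely why the paper remarks that the methods of \cite{Dolinka10,ADV12} cannot yield Theorem~\ref{thm:inv-psg}. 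Your route (a) is also misdirected: there are no semiring identities that need ``rewriting with inversion gadgets,'' since the discriminating identities $\mathbf v_{n,q}^{(r)}=(\mathbf v_{n,q}^{(r)})^2$ are already plain semigroup identities. What is actually needed, and what the paper's Theorem~\ref{thm:inv} provides, is a family of witness algebras in the involution signature with two opposing properties: Ka\softd{}ourek's inverse semigroups $(S_n^{(h)},\cdot,{}^{-1})$ satisfy, on all inverse subsemigroups generated by fewer than $n$ elements, every involution identity of $(B_2^1,\cdot,{}^{-1})$ \cite[Corollary~3.2]{Kad03}, while their multiplicative reducts violate $\mathbf v_{n,m}^{(h)}=(\mathbf v_{n,m}^{(h)})^2$ \cite[Proposition 3.3]{GV22}. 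Given a putative finite basis $\Sigma$ of $(\mathcal{P}(G),\cdot,{}^{-1})$ in fewer than $k$ variables, the first property forces every identity of $\Sigma$ to hold in $(S_k^{(r)},\cdot,{}^{-1})$, hence all identities of $(\mathcal{P}(G),\cdot,{}^{-1})$ would hold there, including $\mathbf v_{k,q}^{(r)}=(\mathbf v_{k,q}^{(r)})^2$, contradicting the second property. Without this argument (or a genuine substitute), your proposal does not close.
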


The paper is structured as follows. Sect.~\ref{sec:prelim} collects the necessary properties of block-groups. The technical core of the paper is Sect.~\ref{sec:identities} where we show that every finite block-group with solvable subgroups satisfies certain semigroup identities from a family of identities introduced in our earlier paper~\cite{GV22}. This allows us in Sect.~\ref{sec:proofs} to quickly deduce Theorems~\ref{thm:psg} and~\ref{thm:hall} from the main result of~\cite{GV22}. In Sect.~\ref{sec:involution} we prove an involution semigroup variant of the main result of~\cite{GV22}, which allows us to infer Theorem~\ref{thm:inv-psg}.

\section{Preliminaries}
\label{sec:prelim}

The present paper uses a few basic notions of semigroup theory such as idempotents, ideals, and Rees quotients. They all can be found in the early chapters of any general semigroup theory text, e.g., \cite{Clifford&Preston:1961,Howie:1995}. We also assume the reader's acquaintance with rudiments of group theory, including the concept of a solvable group, see, e.g., \cite{Hall:1959}.

We recall the definition of a Brandt semigroup as we will need some calculations in such semigroups. Let $I$ be a non-empty set, $\mathcal{G}=(G,\cdot)$ a group, and $0\notin G$ a fresh symbol. The \emph{Brandt semigroup} $\mathcal{B}_{G,I}$ over $\mathcal{G}$ has the set $B_{G,I}:=I \times G \times I\cup \{0\}$ as its carrier, and the multiplication in $\mathcal{B}_{G,I}$ is defined by
\begin{align*}
&(\ell_1,g_1,r_1)\cdot (\ell_2,g_2,r_2):=
\begin{cases}
(\ell_1,g_1g_2,r_2)&\text{if }r_1=\ell_2,\\
0&\text{otherwise,}
\end{cases}&&\text{for all $\ell_1,\ell_2,r_1,r_2\in I,\ g_1,g_2\in G$,}\\
&(\ell,g,r)\cdot 0=0\cdot(\ell,g,r)=0\cdot0:=0&&\text{for all $\ell,r\in I,\ g\in G$.}
\end{align*}
We register a property of Brandt semigroups that readily follows from the definition.

\begin{lemma}
\label{lem:brandt}
Let $I$ be a non-empty set, let $\mathcal{G}=(G,\cdot)$ be a group, and for any $\ell,r\in I$, let $G_{\ell r}:=\{(\ell,g,r)\mid g\in G\}$. Then $(G_{\ell r},\cdot)$ is a maximal subgroup of the Brandt semigroup $\mathcal{B}_{G,I}$ if $\ell=r$; otherwise the product of any two elements from $G_{\ell r}$ is\/ $0$.
\end{lemma}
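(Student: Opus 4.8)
The plan is to treat the two cases of the lemma separately, beginning with the easy one. When $\ell\neq r$, any two elements $(\ell,g_1,r),(\ell,g_2,r)\in G_{\ell r}$ have, in the notation of the multiplication rule, $r_1=r\neq\ell=\ell_2$, so the defining condition $r_1=\ell_2$ fails and their product is $0$ directly by definition. This requires no further work.

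The substantive case is $\ell=r$. First I would verify that $(G_{\ell\ell},\cdot)$ is a group. Closure and associativity are inherited from $\mathcal{B}_{G,I}$, and a direct computation using $(\ell,g,\ell)(\ell,h,\ell)=(\ell,gh,\ell)$ shows that $(\ell,1,\ell)$, with $1$ the identity of $G$, is a two-sided identity and that $(\ell,g^{-1},\ell)$ inverts $(\ell,g,\ell)$; in fact $g\mapsto(\ell,g,\ell)$ is an isomorphism of $G$ onto $G_{\ell\ell}$. It then remains to prove maximality, namely that $G_{\ell\ell}$ is not properly contained in any subgroup of $\mathcal{B}_{G,I}$.

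For maximality I would first locate the idempotents of $\mathcal{B}_{G,I}$: squaring $(\ell',g,r')$ yields a nonzero idempotent exactly when $r'=\ell'$ and $g^2=g$, i.e.\ $g=1$, so the nonzero idempotents are precisely the elements $(\ell',1,\ell')$, together with $0$. Now suppose $H$ is a subgroup of $\mathcal{B}_{G,I}$ with $G_{\ell\ell}\subseteq H$, and let $e$ be the identity of $H$. Since $e$ must satisfy $e\cdot(\ell,1,\ell)=(\ell,1,\ell)\neq0$ while $0$ annihilates everything, $e\neq0$, so $e=(\ell',1,\ell')$ for some $\ell'$; the same equality then forces $\ell'=\ell$, whence $e=(\ell,1,\ell)$. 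Finally, for an arbitrary $h=(\ell'',g,r'')\in H$ the equalities $eh=h$ and $he=h$ force, through the nonvanishing condition $r_1=\ell_2$ in each product, $\ell''=\ell$ and $r''=\ell$ respectively, so $h\in G_{\ell\ell}$. Hence $H=G_{\ell\ell}$, establishing maximality.

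The only genuine obstacle is this last step, and its crux is the observation that the idempotent $e=(\ell,1,\ell)$ acts as a two-sided identity on $H$: this single fact simultaneously pins down the first coordinate of every element of $H$ via $eh=h$ and its third coordinate via $he=h$, collapsing $H$ into $G_{\ell\ell}$. One could alternatively invoke that $\mathcal{B}_{G,I}$ is completely $0$-simple and that maximal subgroups coincide with the $\mathcal{H}$-classes of idempotents, but the direct coordinate argument above is entirely self-contained and avoids importing Green's-relation machinery for a single lemma.
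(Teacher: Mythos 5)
Your proof is correct, and it supplies more detail than the paper itself does: the paper records this lemma with the remark that it ``readily follows from the definition'' and gives no proof whatsoever, so there is no argument of the authors' to compare against, and your direct coordinate verification is exactly the kind of check the authors left to the reader. The case $\ell\neq r$, the isomorphism $g\mapsto(\ell,g,\ell)$ of $\mathcal{G}$ onto $(G_{\ell\ell},\cdot)$, the classification of the nonzero idempotents as the elements $(\ell',1,\ell')$, and the use of $eh=h$ and $he=h$ to pin down both outer coordinates of every element of an overgroup $H$ are all sound. One pinprick: when you write ``for an arbitrary $h=(\ell'',g,r'')\in H$'' you tacitly assume $0\notin H$; as stated, your argument yields only $H\subseteq G_{\ell\ell}\cup\{0\}$. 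This is closed in one line: if $0\in H$, then $0$ would need an inverse $x\in H$ with $0\cdot x=e$, yet $0\cdot x=0\neq(\ell,1,\ell)=e$, a contradiction. With that sentence added, your proof of maximality (and hence of the whole lemma) is complete; your closing remark that one could instead invoke complete $0$-simplicity and $\mathcal{H}$-classes is accurate, but the self-contained route you chose is preferable here, precisely because the paper treats the statement as elementary.
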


An element $a$ of a semigroup $(S,\cdot)$ is said to be \emph{regular} if there exists $b\in S$ satisfying $aba=a$ and $bab=b$; any such $b$ is called an \emph{inverse of $a$}. A semigroup is called \emph{regular}  [resp., \emph{inverse}] if every its element has an inverse [resp., a unique inverse]. Every group $\mathcal{G}=(G,\cdot)$ is an inverse semigroup, and the unique inverse of an element $g\in G$ is nothing but its group inverse $g^{-1}$. The Brandt semigroup $\mathcal{B}_{G,I}$ over $\mathcal{G}$ is inverse as well: for $\ell,r\in I$ and $g\in G$, the unique inverse of $(\ell,g,r)$ is $(r,g^{-1},\ell)$ and the unique inverse of 0 is 0.

Now we present a few properties of block-groups needed for Sect.~\ref{sec:identities}. They all are known, but in some cases, we failed to find any reference that could be used directly (rather one has to combine several facts scattered over the literature). In such cases, we include easy direct proofs for the reader's convenience.

The definition of a block-group can be restated in terms of inverse elements, and it is this version of the definition that is often used in the literature to introduce block-groups.
\begin{lemma}
\label{lem:at_most_one_inv}
A semigroup $(B,\cdot)$ is a block-group if and only if every element in $B$ has at most one inverse.
\end{lemma}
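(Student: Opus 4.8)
The plan is to prove the two implications separately, using as the only engine the elementary fact that if $b$ is an inverse of $a$ (so $aba=a$ and $bab=b$), then both $ab$ and $ba$ are idempotents; indeed $(ab)(ab)=a(bab)=ab$ and similarly $(ba)(ba)=(bab)a=ba$. The whole argument then reduces to short computations that feed the defining implications \eqref{eq:bg1} and \eqref{eq:bg2}.

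For the direction asserting that a block-group enjoys uniqueness of inverses, I would take an element $a$ with two inverses $b$ and $c$ and aim to show $b=c$. First I would record that $ab$, $ac$, $ba$, $ca$ are all idempotents. Then a direct calculation using $aba=a$ and $aca=a$ gives $(ab)(ac)=abac=(aba)c=ac$ and $(ac)(ab)=acab=(aca)b=ab$; these two equalities are exactly the hypotheses of \eqref{eq:bg2} applied to the idempotents $e=ab$ and $f=ac$, so that implication forces $ab=ac$. Symmetrically, $(ba)(ca)=baca=(b)(aca)=ba$ and $(ca)(ba)=caba=c(aba)=ca$ place the idempotents $ba$, $ca$ under \eqref{eq:bg1}, yielding $ba=ca$. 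Finally I would chain these identities through the inverse laws, namely
\[
b=bab=b(ab)=b(ac)=(ba)c=(ca)c=cac=c,
\]
which completes this direction.

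For the converse, assuming every element has at most one inverse, I would verify \eqref{eq:bg1} and \eqref{eq:bg2} directly. Given idempotents $e,f$ with $ef=e$ and $fe=f$, I would check that $f$ is an inverse of $e$, since $efe=(ef)e=e^2=e$ and $fef=(fe)f=f^2=f$; as $e$ is trivially an inverse of itself, the uniqueness hypothesis gives $e=f$, which is \eqref{eq:bg1}. The verification of \eqref{eq:bg2} is entirely analogous: from $ef=f$ and $fe=e$ one computes $efe=(ef)e=fe=e$ and $fef=(fe)f=ef=f$, so again $f$ and $e$ are both inverses of $e$ and must coincide.

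I expect no serious obstacle here, as every step is a one-line manipulation. The only point demanding care is the bookkeeping: matching each computed pair of products to the correct implication \eqref{eq:bg1} or \eqref{eq:bg2} while keeping the roles of $e$ and $f$ consistent, and grouping the factors in the final substitution chain in the right order, since an out-of-place association would break the argument.
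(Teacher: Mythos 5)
Your proof is correct and follows essentially the same route as the paper's: in both directions you pass between pairs of idempotents ($ab$, $ac$, $ba$, $ca$, resp.\ $e$, $f$) and inverse pairs, feed them into the implications \eqref{eq:bg1} and \eqref{eq:bg2}, and close with the same chain $b=bab=\dots=cac=c$. The only cosmetic difference is that you write out both symmetric computations explicitly where the paper appeals to symmetry.
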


\begin{proof}
The `if' part. Take any $e,f\in B$ that satisfy the antecedent of the implication \eqref{eq:bg1}, that is, $ef=e^2=e$ and $fe=f^2=f$. Multiplying $ef=e$ by $e$ on the right and using $e^2=e$, we get $efe=e$, and similarly,
multiplying $fe=f$ by $f$ on the right and using $f^2=f$, we get $fef=f$. Hence $f$ is an inverse of $e$. On the other hand, we have $e=e^2=e^3$ so that $e$ is an inverse of itself. Since $e$ has at most one inverse, we conclude that $e=f$ so that \eqref{eq:bg1} holds. By symmetry, the implication \eqref{eq:bg2} holds as well.

The `only if' part. Take any $a\in B$ and suppose that both $b$ and $c$ are inverses of $a$. Letting $e:=ba$ and $f:=ca$, we have $ef=baca=ba=e$, $e^2=baba=e$, and similarly, $fe=caba=ca=f$, $f^2=caca=f$. Since $(B,\cdot)$ satisfies the implication \eqref{eq:bg1}, we get $e=f$, that is, $ba=ca$. By symmetry, the implication \eqref{eq:bg2} ensures $ab=ac$. Now we have $b=bab=cab=cac=c$.
\end{proof}

Lemma~\ref{lem:at_most_one_inv} shows that regular block-groups are inverse, so that one can think of block-groups as a sort of non-regular analogs of inverse semigroups.

For any semigroup $(S,\cdot)$, we denote the set of all its idempotents by $E(S)$. A semigroup $(S,\cdot)$ is called $\mathscr{J}$-\emph{trivial} if every principal ideal of $(S,\cdot)$ has a unique generator and \emph{periodic} if every one-generated subsemigroup of $(S,\cdot)$ is finite. In \cite[Proposition 2.3]{MP84} it is proved that a finite monoid $(S,\cdot)$ is a block-group if and only if the subsemigroup $(\langle E(S)\rangle,\cdot)$ generated by $E(S)$ is $\mathscr{J}$-trivial. In fact, the proof in \cite{MP84} uses periodicity of $(S,\cdot)$ rather than its finiteness and does not use the identity element of $(S,\cdot)$. Thus, we have the following characterization of periodic block-groups.

\begin{proposition}
\label{prop:block-group}
A periodic semigroup is a block-group if and only if the subsemigroup generated by its idempotents is $\mathscr{J}$-trivial.
\end{proposition}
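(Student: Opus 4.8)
The plan is to prove Proposition~\ref{prop:block-group} by leveraging the cited result of Margolis and Pin~\cite[Proposition 2.3]{MP84}, which treats the monoid case, and carefully removing the two hypotheses that distinguish our setting from theirs: finiteness (replaced by periodicity) and the presence of an identity element. Since the statement is an ``if and only if'', I would handle the two directions separately, but the guiding idea in both is to reduce to the monoid situation by adjoining an identity.

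\smallskip

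First I would observe that adjoining an identity element behaves well with respect to the block-group property and to the structure $\langle E(S)\rangle$. Given a periodic semigroup $(S,\cdot)$, form the monoid $S^1$ obtained by adjoining a fresh identity $1$ (or using $S$ itself if it already is a monoid). One checks directly from the defining implications \eqref{eq:bg1} and \eqref{eq:bg2} that $(S,\cdot)$ is a block-group if and only if $S^1$ is a block-group: the identity $1$ is the unique idempotent that is an inverse of itself in the required sense, and it cannot be forced equal to any other idempotent by \eqref{eq:bg1} or \eqref{eq:bg2} unless that idempotent already equals $1$, so adding it creates no new violations and destroys none. Likewise, the new identity is an idempotent, so $E(S^1)=E(S)\cup\{1\}$, whence $\langle E(S^1)\rangle=\langle E(S)\rangle^1$; adjoining an identity to a semigroup preserves $\mathscr{J}$-triviality, since the principal ideals generated by $1$ consist of all of $S^1$ and $1$ is clearly the unique generator of this top ideal, while the principal ideals of the other elements are unaffected up to the harmless addition of $1$ as a factor. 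Consequently $\langle E(S)\rangle$ is $\mathscr{J}$-trivial if and only if $\langle E(S^1)\rangle$ is.

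\smallskip

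With these two equivalences in hand, the proposition for $(S,\cdot)$ follows from the corresponding statement for the monoid $S^1$. Here I would invoke the stronger form of the Margolis--Pin result noted in the text: their argument uses only periodicity of the ambient semigroup, not finiteness, and does not appeal to the identity element beyond its role in making $S^1$ a monoid. Thus $S^1$, being a periodic monoid, is a block-group if and only if $\langle E(S^1)\rangle$ is $\mathscr{J}$-trivial. Chaining this with the two reductions from the previous paragraph gives exactly the claimed equivalence for $(S,\cdot)$.

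\smallskip

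The main obstacle I anticipate is not the logical skeleton but the verification that the Margolis--Pin proof genuinely survives the weakening of ``finite'' to ``periodic'' and the removal of the identity. The text asserts this, so I would treat it as available, but to make the deduction airtight I would need to confirm that every finiteness appeal in their proof can be replaced by periodicity — in particular that each element generates a finite cyclic subsemigroup containing a unique idempotent power, which is the only place finiteness is truly used when showing that regular $\mathscr{J}$-classes carry group structure and that idempotents behave as required. Once that reduction to periodicity is granted, the adjunction-of-identity bookkeeping is routine, and the proof is complete.
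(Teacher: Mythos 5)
Your proof is correct, but it takes a genuinely different (and somewhat more self-contained) route than the paper. The paper gives no formal argument at all: it cites \cite[Proposition 2.3]{MP84} and disposes of both discrepancies with that result --- finite vs.\ periodic, and monoid vs.\ semigroup --- by a single inspection claim, namely that the Margolis--Pin proof uses only periodicity and never invokes the identity element. You split the two discrepancies: you accept the periodicity strengthening on the same inspection grounds, but you eliminate the monoid hypothesis by an explicit reduction, adjoining an identity and checking that $S$ satisfies \eqref{eq:bg1} and \eqref{eq:bg2} if and only if $S^1$ does, that $E(S^1)=E(S)\cup\{1\}$ and hence $\langle E(S^1)\rangle=\langle E(S)\rangle\cup\{1\}$, and that $\mathscr{J}$-triviality passes both ways across the adjunction (the adjoined identity forms its own $\mathscr{J}$-class, since for $a\in\langle E(S)\rangle$ the principal ideal of $a$ is contained in $\langle E(S)\rangle$ and so never contains $1$, while the principal ideals of the old elements are literally the same sets computed in either semigroup). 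All of these verifications are sound as sketched, and your argument tacitly uses the (immediate) fact that $S^1$ is again periodic, as needed to invoke the strengthened monoid result. What your route buys is robustness: the only assertion left to take on trust from \cite{MP84} is that finiteness can be relaxed to periodicity, whereas the paper's reader must additionally trust that the identity element plays no role in that proof; the cost is the adjunction bookkeeping, which you carry out correctly.
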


As a consequence, we have the following observation.

\begin{lemma}
\label{lem:ef-idempotent}
If $(S,\cdot)$ is a periodic block-group, then every regular element of the subsemigroup $(\langle E(S)\rangle,\cdot)$ lies in $E(S)$.
\end{lemma}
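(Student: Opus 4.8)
The plan is to reduce the statement to the well-known fact that regular elements of a $\mathscr{J}$-trivial semigroup are idempotent. Write $T:=\langle E(S)\rangle$. Since $(S,\cdot)$ is a periodic block-group, Proposition~\ref{prop:block-group} tells us that $T$ is $\mathscr{J}$-trivial. I would first record the easy but useful observation that $E(T)=E(S)$: every idempotent of $T$ is an idempotent of $S$ lying in $T$, and conversely $E(S)\subseteq T$ by the definition of $T$. Hence the claim amounts to showing that every element of $T$ that is regular \emph{in} $T$ belongs to $E(T)$.

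So let $a\in T$ be regular, with inverse $b\in T$, that is, $aba=a$ and $bab=b$. Put $e:=ab$; then $e^2=abab=a(bab)=ab=e$, so $e\in E(T)$. The heart of the argument is to show that $a$ and $e$ generate the same principal ideal of $T$. On one hand, $e=ab=1\cdot a\cdot b\in T^1aT^1$, whence $T^1eT^1\subseteq T^1aT^1$; on the other hand, $a=aba=e\cdot a\in T^1eT^1$, whence $T^1aT^1\subseteq T^1eT^1$. Thus $T^1aT^1=T^1eT^1$, and since $T$ is $\mathscr{J}$-trivial, a principal ideal has a unique generator, forcing $a=e$. Therefore $a=e\in E(T)=E(S)$, as required.

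The argument is short, and I do not anticipate a genuine obstacle; the only points demanding care are the bookkeeping with the monoid $T^1$ obtained by adjoining an identity (needed to write principal ideals uniformly) and the verification that $E(T)=E(S)$, which is what lets us transfer the conclusion from ``$a$ is idempotent in $T$'' to ``$a\in E(S)$''. An alternative, essentially equivalent, route would be to note that $\mathscr{J}$-triviality forces all of Green's relations to be trivial and then deduce $a=ab$ from $a\,\mathscr{R}\,ab$; I prefer the direct principal-ideal computation above since it uses only the definition of $\mathscr{J}$-triviality recalled in the text.
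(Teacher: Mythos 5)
Your core computation is fine, and it uses $\mathscr{J}$-triviality exactly as the paper does; the problem is your reading of the hypothesis, which makes the lemma you actually prove too weak, and this is a genuine gap. You take ``regular element of $(\langle E(S)\rangle,\cdot)$'' to mean that $a$ has an inverse $b$ already lying in $T=\langle E(S)\rangle$. The paper means (and later needs) regularity in the ambient semigroup $S$: in the proof of Lemma~\ref{lem:fd}, this lemma is applied to the element $f(\ell,e,\ell)$, which is regular merely because it lies in the Brandt ideal $B$; its inverse lies in $B$, and there is no a priori reason for that inverse to be a product of idempotents. So the statement you proved cannot be invoked at the one place where the lemma is used.

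The missing idea is the FitzGerald trick \cite[Lemma 1]{FitzGerald:70}: if $a\in\langle E(S)\rangle$ is regular in $S$, then $a$ has an inverse that is again a product of idempotents, hence lies in $\langle E(S)\rangle$; since a block-group has at most one inverse per element (Lemma~\ref{lem:at_most_one_inv}), this inverse is the unique inverse $b$ of $a$. This is precisely the first step of the paper's proof, which then also finishes by $\mathscr{J}$-triviality (getting $a=b$ from $a=aba$ and $b=bab$, hence $a=a^3$, hence $a\mathrel{\mathscr{J}}a^2$ and $a=a^2$); your endgame via $e:=ab$ and $a\mathrel{\mathscr{J}}e$ is an equally good alternative. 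In short: prepend the FitzGerald step and your argument becomes a complete and correct proof of the lemma in the strength the paper requires; without it, what you have proved is strictly weaker.
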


\begin{proof}
Let $a\in\langle E(S)\rangle$ be a regular element and $b$ its inverse. The FitzGerald Trick \cite[Lemma 1]{FitzGerald:70} shows that $b\in\langle E(S)\rangle$ as well. Since $a=aba$ and $b=bab$ generate the same ideal in $(\langle E(S)\rangle,\cdot)$, Proposition \ref{prop:block-group} implies $a=b$. Thus, $a=a^3$, whence $a$ and $a^2$ also generate the same ideal in $(\langle E(S)\rangle,\cdot)$. Now Proposition \ref{prop:block-group} implies $a=a^2\in E(S)$.
\end{proof}

\begin{lemma}
\label{lem:fd}
If a Brandt semigroup $(B,\cdot)$ occurs as an ideal in a periodic block-group $(S,\cdot)$, then for all $b\in B,\ f\in E(S)$, either $fb=b$ or $fb=0$.
\end{lemma}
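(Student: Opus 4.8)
The plan is to treat the zero of $B$ and the `honest' triples $(\ell,g,r)$ separately, reducing the whole statement to the single assertion that one particular product of two idempotents is again an idempotent.

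First I would dispose of the case $b=0$. Since $B$ is an ideal of $S$ and $0\in B$, the product $f\cdot 0$ lies in $B$; multiplying it on the right by $0$ and using that $0$ is the zero of $B$ gives $f\cdot 0=(f\cdot 0)\cdot 0=0$, so $fb=0$ here. Now suppose $b=(\ell,g,r)$. The key idea is to factor $b$ through the diagonal idempotent $e:=(\ell,1_G,\ell)$ of $B$, which by the Brandt multiplication rule satisfies $eb=b$. Hence $fb=f(eb)=(fe)b$, and it suffices to pin down the single element $fe$. Since $f$ and $e$ both lie in $E(S)$, we have $fe\in\langle E(S)\rangle$; since $B$ is an ideal, $fe\in B$; and since the Brandt semigroup $B$ is inverse, $fe$ has an inverse inside $B\subseteq S$ and is therefore a regular element. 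This is exactly the situation covered by Lemma~\ref{lem:ef-idempotent}, which then forces $fe\in E(S)$.

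The main obstacle is precisely this step---recognizing that the product $fe$ of two idempotents collapses to an idempotent---and it is where the block-group hypothesis is genuinely used, through $\mathscr{J}$-triviality of $\langle E(S)\rangle$ as packaged in Lemma~\ref{lem:ef-idempotent} (and the FitzGerald Trick behind it, which keeps the relevant inverse inside $\langle E(S)\rangle$). Once $fe$ is known to be idempotent, the remainder is a short computation. By Lemma~\ref{lem:brandt}, the only idempotents of $B$ are $0$ and the diagonal triples $(\ell',1_G,\ell')$, since $(\ell',h,\ell')$ is idempotent only when $h=1_G$; thus $fe\in\{0\}\cup\{(\ell',1_G,\ell')\mid \ell'\in I\}$.

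Finally I would substitute each possibility into $fb=(fe)b$. When $fe=0$, or when $fe=(\ell',1_G,\ell')$ with $\ell'\neq\ell$, the Brandt multiplication rule yields $fb=0$; when $fe=(\ell,1_G,\ell)=e$, it yields $fb=(\ell,g,r)=b$. In every case $fb\in\{0,b\}$, as required.
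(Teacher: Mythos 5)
Your proof is correct and follows essentially the same route as the paper's: factor $b$ through the diagonal idempotent $(\ell,1_G,\ell)$, use regularity of $fe$ (as an element of the ideal $B$, which is inverse) together with Lemma~\ref{lem:ef-idempotent} to conclude that $fe$ is an idempotent of $B$, and then inspect the possible idempotents of a Brandt semigroup. The only cosmetic difference is that the paper rules out the case $fe=(\ell',1_G,\ell')$ with $\ell'\neq\ell$ by right-multiplying by $(\ell',1_G,\ell')$, whereas you keep that case and observe it also yields $fb=0$; both work.
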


\begin{proof}
If the semigroup $(B,\cdot)$ is represented as $\mathcal{B}_{G,I}$ for some group $(G,\cdot)$ and some non-empty set $I$, then each non-zero idempotent is of the form $(i,e,i)$ where $i\in I$ and $e$ is the identity element of the group $\mathcal{G}$. Take an arbitrary element $b=(\ell,g,r)\in I\times G\times I$. Then $b=(\ell,e,\ell)b$ whence $fb=f(\ell,e,\ell)b$ for each $f\in E(S)$.  The product $f(\ell,e,\ell)$ lies in $B$ and so it is regular. By Lemma \ref{lem:ef-idempotent} $f(\ell,e,\ell)$ is an idempotent. If $f(\ell,e,\ell)\ne0$, then $f(\ell,e,\ell)=(j,e,j)$ for some $j\in I$. Multiplying the equality through by $(j,e,j)$ on the right yields $f(\ell,e,\ell)(j,e,j)=(j,e,j)$ whence $j=\ell$. We conclude that either $f(\ell,e,\ell)=(\ell,e,\ell)$ or $f(\ell,e,\ell)=0$, whence either $fb=b$ or $fb=0$.
\end{proof}

A \emph{principal series} of a semigroup $(S,\cdot)$ is a chain
\begin{equation}
\label{eq:series}
S_0\subset S_1\subset \dots \subset S_h=S
\end{equation}
of ideals $S_j$ of $(S,\cdot)$ such that $S_0$ is the least ideal of $(S,\cdot)$ and there is no ideal of $(S,\cdot)$ strictly between $S_{j-1}$ and $S_j$ for $j=1,\dots,h$. The \emph{factors} of the series \eqref{eq:series} are the Rees quotients $(S_j/S_{j-1},\cdot)$, $j=1,\dots,h$. The number $h$ is called the \emph{height} of the series \eqref{eq:series}.

Recall that a semigroup $(Z,\cdot)$ with a zero element 0 is called a \emph{zero semigroup} if $ab = 0$ for all $a,b\in Z$. A semigroup $(T,\cdot)$ with a zero element 0 is 0-\emph{simple} if it is not a zero semigroup and $\{0\}$ and $T$ are the only ideals of $(T,\cdot)$. A semigroup $(T,\cdot)$ is \emph{simple} if $T$ is its only ideal. (Observe that a simple semigroup may consist of a single element while any 0-simple semigroup has at least two elements.) By \cite[Lemma 2.39]{Clifford&Preston:1961}, if \eqref{eq:series} is a principal series, then $(S_0,\cdot)$ is a simple semigroup and every factor $(S_j/S_{j-1},\cdot)$, $j=1,\dots,h$, is either a 0-simple semigroup or a zero semigroup with at least two elements.

Restricting a classical result of semigroup theory (see \cite[Theorem~3.5]{Clifford&Preston:1961} or \cite[Theorem 3.2.3]{Howie:1995}) to the case of periodic block-groups yields the following.
\begin{lemma}
\label{lem:bg_c0s}
If a periodic block-group is simple, it is a group, and if it is $0$-simple, it is a Brandt semigroup.
\end{lemma}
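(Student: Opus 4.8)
The plan is to prove Lemma~\ref{lem:bg_c0s} by invoking the classical Rees–Suschkewitsch structure theory for (0-)simple periodic semigroups and then using the block-group hypothesis to collapse the structure to a group or a Brandt semigroup. Recall that a periodic semigroup containing a 0-simple (resp.\ simple) subsemigroup automatically contains idempotents, so these semigroups are in fact completely 0-simple (resp.\ completely simple). By the Rees Theorem, a completely simple semigroup is isomorphic to a Rees matrix semigroup $\mathcal{M}(G;I,\Lambda;P)$ over a group $G$ with a sandwich matrix $P$ having no zero entries, and a completely 0-simple semigroup is isomorphic to a Rees matrix semigroup with zero $\mathcal{M}^0(G;I,\Lambda;P)$ where every row and every column of $P$ contains at least one non-zero entry. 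The task is to show that the block-group law forces $|I|=|\Lambda|=1$ in the simple case, and forces $P$ to be (equivalent to) a square regularized identity matrix in the 0-simple case, which is exactly the defining data of a Brandt semigroup.

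The key observation is that the block-group condition means every element has at most one inverse (Lemma~\ref{lem:at_most_one_inv}), and equivalently, by Proposition~\ref{prop:block-group}, the idempotent-generated subsemigroup is $\mathscr{J}$-trivial. First I would count idempotents in a Rees matrix representation. In $\mathcal{M}(G;I,\Lambda;P)$, the idempotents are precisely the triples $(i,p_{\lambda i}^{-1},\lambda)$, one for each pair $(i,\lambda)\in I\times\Lambda$, so there are $|I|\cdot|\Lambda|$ of them; similarly in the 0-simple case the non-zero idempotents correspond to pairs $(i,\lambda)$ with $p_{\lambda i}\neq 0$. In a simple (hence completely simple) semigroup every element is regular, so if $|I|>1$ or $|\Lambda|>1$ there are at least two distinct idempotents $e,f$ lying in a common completely simple $\mathscr{D}$-class; a short computation shows that $ef$ and $fe$ are again idempotents and that $e$ and $f$ then fail to be $\mathscr{J}$-comparable in the required way, contradicting $\mathscr{J}$-triviality of $\langle E(S)\rangle$ (indeed two distinct idempotents in a completely simple semigroup generate a non-$\mathscr{J}$-trivial subsemigroup). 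Hence $|I|=|\Lambda|=1$ and the simple block-group is just the group $G$.

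For the 0-simple case the argument is structurally the same but more delicate. I would again pass to the Rees matrix representation $\mathcal{M}^0(G;I,\Lambda;P)$ and argue that $\mathscr{J}$-triviality of $\langle E(S)\rangle$ forces every non-zero product of two non-zero idempotents to equal one of them or to equal $0$; in the Rees matrix picture this constrains $P$ so that in each row and each column there is exactly one non-zero entry, and after the standard normalization (rescaling rows and columns of $P$) one may take all those entries to be the group identity $e$, with $|I|=|\Lambda|$. That normalized data $\mathcal{M}^0(G;I,I;P)$ with $p_{ii}=e$ and $p_{\lambda i}=0$ for $\lambda\neq i$ is exactly the Brandt semigroup $\mathcal{B}_{G,I}$: the multiplication $(i,g,\lambda)(j,h,\mu)$ is non-zero iff $\lambda=j$, matching the definition given before Lemma~\ref{lem:brandt}. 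Alternatively, and perhaps more cleanly, I would characterize Brandt semigroups intrinsically as the completely 0-simple inverse semigroups and then note that Lemma~\ref{lem:at_most_one_inv} already guarantees the regular 0-simple block-group is inverse, so it is Brandt directly.

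The main obstacle I anticipate is the bookkeeping in the 0-simple case: showing that $\mathscr{J}$-triviality of $\langle E(S)\rangle$ (rather than of $S$ itself, which need not hold) pins down the sandwich matrix to have the Brandt pattern, and verifying that the regularization/normalization of $P$ does not lose generality. The cleanest route is almost certainly to avoid explicit Rees matrix computations entirely: since the semigroup is completely 0-simple and, by Lemma~\ref{lem:at_most_one_inv}, a block-group in which every element is regular and hence inverse, it is a completely 0-simple \emph{inverse} semigroup, and it is a standard fact (e.g.\ \cite[Theorem~3.2.3]{Howie:1995}) that completely 0-simple inverse semigroups are precisely the Brandt semigroups. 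Thus the real content is merely to identify ``periodic block-group that is 0-simple'' with ``completely 0-simple inverse semigroup'', after which the classical theorem finishes the proof.
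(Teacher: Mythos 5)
Your proposal is correct and is essentially the paper's own proof: the paper disposes of this lemma in a single sentence by citing classical structure theory, which amounts exactly to your ``cleanest route'' --- periodicity upgrades (0-)simple to completely (0-)simple, every element is then regular, so by Lemma~\ref{lem:at_most_one_inv} the semigroup is inverse, and a completely simple (resp.\ completely $0$-simple) inverse semigroup is a group (resp.\ a Brandt semigroup). Two of your stated justifications would need repair in a written-out version, however: complete (0-)simplicity does \emph{not} follow from the mere presence of idempotents (the bicyclic monoid is simple and has idempotents, yet is not completely simple) --- what is needed is the classical fact that a \emph{periodic} (0-)simple semigroup contains a \emph{primitive} idempotent; and for idempotents $e,f$ of a completely simple semigroup the products $ef$, $fe$ need not be idempotent --- that step requires the block-group hypothesis (it follows, e.g., from Lemma~\ref{lem:ef-idempotent}, since every element here is regular), although your parenthetical claim that two distinct idempotents of a completely simple semigroup never generate a $\mathscr{J}$-trivial subsemigroup is indeed true and suffices for the simple case.
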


\begin{corollary}
\label{cor:series}
If a periodic block-group possesses a principal series, then its least ideal is a group and the non-zero factors of the series are Brandt semigroups.
\end{corollary}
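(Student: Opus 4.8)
The plan is to reduce everything to Lemma~\ref{lem:bg_c0s} by showing that the pieces of the principal series we care about are again periodic block-groups. We already have at our disposal the structural result of Clifford and Preston recalled just before the statement: for a principal series \eqref{eq:series}, the least ideal $(S_0,\cdot)$ is simple, while each factor $(S_j/S_{j-1},\cdot)$ is either $0$-simple or a zero semigroup with at least two elements, so the ``non-zero factors'' are precisely the $0$-simple ones. Thus it suffices to verify that $(S_0,\cdot)$ and each $0$-simple factor inherit both periodicity and the block-group property from $(S,\cdot)$, after which Lemma~\ref{lem:bg_c0s} yields the conclusion directly.

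For the least ideal, note first that $(S_0,\cdot)$ is a subsemigroup of the periodic block-group $(S,\cdot)$. Periodicity passes to subsemigroups, since a one-generated subsemigroup of $S_0$ is also a one-generated subsemigroup of $S$ and hence finite. The block-group property likewise passes to subsemigroups: by Lemma~\ref{lem:at_most_one_inv} it is equivalent to every element having at most one inverse, and any two inverses of an element of $S_0$ would also be inverses of that element in $S$. Hence $(S_0,\cdot)$ is a periodic block-group, and being simple, it is a group by Lemma~\ref{lem:bg_c0s}.

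For the factors, observe that $S_{j-1}$ is an ideal of the subsemigroup $(S_j,\cdot)$, so each factor $(S_j/S_{j-1},\cdot)$ is the Rees quotient of the periodic block-group $(S_j,\cdot)$ by this ideal. Periodicity survives the quotient, because the powers of any element of $S_j/S_{j-1}$ are images of the powers of an element of $S_j$, and the latter form a finite set. The crucial point is that the block-group property also survives. Since this property is a quasi-identity, it need not be preserved under arbitrary homomorphic images, so I would argue directly via the inverse characterization of Lemma~\ref{lem:at_most_one_inv}, showing that collapsing the ideal $S_{j-1}$ to the new zero creates no new inverses. Concretely, if $\bar a$ is a non-zero element of $S_j/S_{j-1}$ with a non-zero inverse $\bar b$, then $\overline{aba}=\bar a\ne 0$ forces $aba\notin S_{j-1}$ and hence $aba=a$ in $S_j$, and similarly $bab=b$, so $b$ is an inverse of $a$ in $S_j$; meanwhile the zero is its own unique inverse and is an inverse of no non-zero element. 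Thus every element of $S_j/S_{j-1}$ inherits the ``at most one inverse'' property from $S_j$, so each factor is a periodic block-group, and a $0$-simple such factor is therefore a Brandt semigroup by Lemma~\ref{lem:bg_c0s}. The main obstacle is exactly this last preservation step, which is why I would phrase it through inverses rather than through the defining implications \eqref{eq:bg1} and \eqref{eq:bg2}.
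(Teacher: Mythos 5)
Your proposal is correct and follows the same route the paper intends: the corollary is stated there without proof, as an immediate combination of the Clifford--Preston structure result for principal series with Lemma~\ref{lem:bg_c0s}, exactly as you do. Your only addition is to spell out the routine verifications the paper leaves implicit --- that periodicity and the block-group property (via the ``at most one inverse'' characterization of Lemma~\ref{lem:at_most_one_inv}) pass to the subsemigroups $S_j$ and to the Rees quotients $S_j/S_{j-1}$ --- and these verifications are carried out correctly.
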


We call \eqref{eq:series} a \emph{Brandt series} if $(S_0,\cdot)$ is a group and every factor $(S_j/S_{j-1},\cdot)$ with $j=1,\dots,h$, which is not a zero semigroup, is a Brandt semigroup. Thus, Corollary~\ref{cor:series} can be restated as saying that a principal series of a periodic block-group is a Brandt series (provided the series exists). The converse is also true, even without periodicity.

\begin{lemma}
\label{lem:bg_series}
If a semigroup possesses a Brandt series, it is a block-group.
\end{lemma}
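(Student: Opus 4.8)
The plan is to verify the block-group condition through its inverse-element reformulation supplied by Lemma~\ref{lem:at_most_one_inv}: it suffices to show that every element of $S$ has \emph{at most} one inverse. So I would fix an arbitrary $a\in S$ together with two inverses $b,c$ (meaning $aba=a$, $bab=b$, $aca=a$, $cac=c$) and work towards the conclusion $b=c$.

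The first step is to pin down where $a$, $b$, and $c$ sit inside the series \eqref{eq:series}. Let $j$ be least with $a\in S_j$ (with the convention $S_{-1}=\varnothing$), so that $a\in S_j\setminus S_{j-1}$. The standard fact that mutually inverse elements are $\mathscr{D}$-related (one checks $a\mathrel{\mathscr{R}}ab\mathrel{\mathscr{L}}b$, and likewise for $c$) shows that $b$ and $c$ lie in the same $\mathscr{J}$-class $J_a$ as $a$. Since each $S_i$ is an ideal and hence a union of $\mathscr{J}$-classes, the class $J_a$ is confined between $S_{j-1}$ and $S_j$: it meets neither $S_{j-1}$ (else $a\in S_{j-1}$) nor the complement of $S_j$ (since $a\in S_j$), so $J_a\subseteq S_j\setminus S_{j-1}$. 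In particular $a,b,c\in S_j\setminus S_{j-1}$.

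The second step is to pass to the factor $T:=S_j/S_{j-1}$ of the series. As the Rees quotient map $\varphi\colon S_j\to T$ is a homomorphism that is injective off $S_{j-1}$, the images $\bar a,\bar b,\bar c$ are nonzero and satisfy $\bar a=\bar a\bar b\bar a$, $\bar b=\bar b\bar a\bar b$, together with the analogous relations for $\bar c$; thus $\bar b$ and $\bar c$ are both inverses of $\bar a$ in $T$. Now $\bar a$ is a nonzero regular element, so $T$ cannot be a zero semigroup, because in a zero semigroup the relation $z=zwz$ forces $z=0$. By the definition of a Brandt series, $T$ is therefore a Brandt semigroup when $j\ge1$ and is the group $S_0$ when $j=0$; in either case $T$ is an inverse semigroup, so $\bar a$ has a unique inverse and $\bar b=\bar c$. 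Since $\varphi$ is injective on $S_j\setminus S_{j-1}$ and $b,c$ live there, this yields $b=c$, completing the argument via Lemma~\ref{lem:at_most_one_inv}.

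The step I expect to require the most care is the localization in the second paragraph: the inverses $b,c$ are a priori arbitrary elements of $S$, and the whole argument hinges on forcing them into the single layer $S_j\setminus S_{j-1}$, so that the quotient $\varphi$ can be applied and, conversely, so that uniqueness in $T$ can be lifted back to $S$. This rests on the two interlocking facts that inverse pairs are $\mathscr{J}$-related and that ideals are unions of $\mathscr{J}$-classes; once these are in place, the passage to the factor and the appeal to the inverse-semigroup structure of groups and Brandt semigroups are routine, and no periodicity assumption is needed.
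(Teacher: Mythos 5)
Your proof is correct and follows essentially the same route as the paper's: reduce to the at-most-one-inverse criterion of Lemma~\ref{lem:at_most_one_inv}, show all inverses of $a$ lie in the same layer $S_j\setminus S_{j-1}$ (the paper phrases this via ``inverses generate the same principal ideal'' rather than $\mathscr{J}$-classes), pass to the Rees quotient, and use that the relevant factor is an inverse semigroup. Your explicit verification that the factor cannot be a zero semigroup (since $\bar a$ is a nonzero regular element) is a point the paper leaves implicit, but this is a cosmetic difference, not a different argument.
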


\begin{proof}
Let \eqref{eq:series} be a Brandt series in a semigroup $(S,\cdot)$. Take any regular element $a\in S$ and let $j\in\{0,1,\dots,h\}$ be the least number with $a\in S_j$. Recall that each inverse of $a$ generates the same ideal of $(S,\cdot)$ as does $a$. Therefore, if $j=0$, all inverses of $a$ lie in the least ideal of $(S,\cdot)$ which is a group. Then all inverses of $a$ are equal since they coincide with the group inverse of $a$ in the group $(S_0,\cdot)$. If $j>0$, we have $a\in S_j\setminus S_{j-1}$, and all inverses of $a$ also lie in $S_j\setminus S_{j-1}$. Their images in the Rees quotient $(S_j/S_{j-1},\cdot)$ are inverses of the image of $a$, and hence, they coincide as the quotient must be an inverse semigroup. Since non-zero elements of $S_j/S_{j-1}$ are in a 1-1 correspondence with the elements of $S_j\setminus S_{j-1}$, all inverses of $a$ are equal also in this case. Now Lemma~\ref{lem:at_most_one_inv} shows that $(S,\cdot)$ is a block-group.
\end{proof}

We also need the following property of $\mathscr{J}$-trivial semigroups.

\begin{lemma}[{\!\cite[Lemma 8.2.2(iii)]{Almeida:95}}]
\label{lem:j-triv}
If a $\mathscr{J}$-trivial semigroup $(S,\cdot)$ satisfies the identity $x^p=x^{p+1}$ and $w$ is an arbitrary semigroup word, then $(S,\cdot)$ satisfies the identity $w^p=u^p$ where $u$ is the product, in any order, of the variables that occur in $w$.
\end{lemma}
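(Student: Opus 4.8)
The plan is to reduce the statement to a single two‑sided \emph{absorption} identity and then assemble the claim from it in two lines. Write $c(v)$ for the set of variables occurring in a word $v$. Note first that the hypothesis $x^p=x^{p+1}$ makes every element of the form $v^p$ idempotent, since $v^{2p}=v^p$, and that $\mathscr{J}$‑triviality upgrades two‑sided divisibility to equality: if $a$ lies in the principal ideal generated by $b$ and $b$ lies in the principal ideal generated by $a$, then $a=b$, because these ideals coincide and hence have the same generator.

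I would begin with the \emph{rotation identity} $(v_1v_2)^p=(v_2v_1)^p$ for arbitrary words $v_1,v_2$. Substituting $x:=v_2v_1$ into $x^p=x^{p+1}$ and regrouping gives $(v_2v_1)^p=(v_2v_1)^{p+1}=v_2(v_1v_2)^pv_1$, so $(v_2v_1)^p$ lies in the principal ideal generated by $(v_1v_2)^p$; the symmetric computation gives the reverse containment, and $\mathscr{J}$‑triviality forces equality. Iterating this, the value of $v^p$ is left unchanged by any cyclic rotation of $v$.

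Next I would prove the two single‑letter absorptions: for each variable $x\in c(v)$ one has $x\,v^p=v^p$ and $v^p\,x=v^p$. By rotation invariance it suffices to establish each when $x$ occupies the relevant end of $v$. If $v=cx$, then $v^p=v\cdot v^p=cx\,v^p$, so $v^p$ lies in the principal ideal generated by $x\,v^p$; combined with the evident reverse containment and $\mathscr{J}$‑triviality this yields $x\,v^p=v^p$. Dually, if $v=xc$, then $v^p=v^p\cdot v=v^pxc$ gives $v^p\,x=v^p$. Peeling off the letters of an arbitrary word $w'$ with $c(w')\subseteq c(v)$ one at a time then promotes these to the two‑sided absorption $v^pw'=v^p=w'v^p$.

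The theorem now follows immediately. Since $u$ is a product of the variables occurring in $w$, we have $c(u)=c(w)$, whence $c(u^p)=c(w)$ and $c(w^p)=c(u)$. Applying absorption with $v:=u$ and $w':=w^p$ gives $u^pw^p=u^p$, and applying it with $v:=w$ and $w':=u^p$ gives $u^pw^p=w^p$; therefore $w^p=u^p$. The main obstacle is the reverse divisibility inside the absorption step: naive manipulation of $v^p\,x$ keeps reproducing an equivalent instance of the very same equality, and the asymmetry is broken only by first rotating the target letter to a boundary of $v$, which pushes the surplus factor to the outside and exhibits $v^p$ as a two‑sided multiple of $x\,v^p$ (respectively $v^p\,x$). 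Checking that every step holds under an arbitrary substitution — so that the conclusion is a genuine identity — is then routine, since each manipulation is valid in any $\mathscr{J}$‑trivial semigroup satisfying $x^p=x^{p+1}$.
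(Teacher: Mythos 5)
Your proof is correct. There is nothing in the paper to compare it against: the paper does not prove this lemma but imports it verbatim from Almeida's book (Lemma 8.2.2(iii) there), and your rotation-plus-absorption argument is essentially the standard derivation of that fact. All steps check: $(v_2v_1)^p=v_2(v_1v_2)^pv_1$ together with its mirror image and unique generation of principal ideals gives the rotation identity; placing the variable at the appropriate end of $v$ (right end for $xv^p=v^p$, left end for $v^px=v^p$) is exactly what breaks the circularity you flag, since then $v^p=c\,(xv^p)$ (resp.\ $v^p=(v^px)\,c$) exhibits the reverse divisibility; and the two final applications of two-sided absorption yield $u^p=u^pw^p=w^p$. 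The only point worth a parenthetical remark is the degenerate case where the cofactor $c$ is empty (e.g.\ $v$ a single letter), where the absorption is immediate from $x^{p+1}=x^p$ without invoking $\mathscr{J}$-triviality; your argument covers it, just with one step collapsing.
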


\section{Identities holding in block-groups with solvable subgroups}
\label{sec:identities}

Here we aim to show that every finite block-group with solvable subgroups satisfies certain identities constructed in~\cite{GV22}. For the reader's convenience, we reproduce the construction in detail so that no acquaintance with~\cite{GV22} is necessary for understanding the results and the proofs of this section.

For any $i,h\ge1$, consider the set $X_i^{(h)}:=\{x_{i_1i_2\cdots i_h}\mid i_1,i_2,\dots,i_h\in\{1,2,\dots,i\}\}$. We fix arbitrary $n,m\ge1$ and introduce a family of words $\mathbf v_{n,m}^{(h)}$ over $X_{2n}^{(h)}$, $h=1,2,\dotsc$, by induction.

For $h=1$, let
\begin{equation}
\label{eq:v1}
\mathbf v_{n,m}^{(1)}:=x_1x_2\cdots x_{2n}\,\Bigl(x_nx_{n-1}\cdots x_1\cdot x_{n+1}x_{n+2}\cdots x_{2n}\Bigr)^{2m-1}.
\end{equation}

Assuming that $h>1$ and the word $\mathbf v_{n,m}^{(h-1)}$ over $X_{2n}^{(h-1)}$ has already been defined, we create $2n$ copies of this word over the alphabet $X_{2n}^{(h)}$ as follows. We start by taking for every $j\in\{1,2,\dots,2n\}$, the substitution $\sigma_{2n,j}^{(h)}\colon X_{2n}^{(h-1)}\to X_{2n}^{(h)}$ that appends $j$ to the indices of its arguments, that is,
\begin{equation}
\label{eq:sigma_j}
\sigma_{2n,j}^{(h)}(x_{i_1i_2\dots i_{h-1}}):= x_{i_1i_2\dots i_{h-1}j} \text{ for all } i_1,i_2,\dots,i_{h-1}\in\{1,2,\dots,2n\}.
\end{equation}
Then we let $\mathbf v_{n,m,j}^{(h-1)}:=\sigma_{2n,j}^{(h)}(\mathbf v_{n,m}^{(h-1)})$ and define
\begin{equation}
\label{eq:vnh}
\mathbf v_{n,m}^{(h)}:=\mathbf v_{n,m,1}^{(h-1)}\cdots\mathbf v_{n,m,2n}^{(h-1)}\Bigl(\mathbf v_{n,m,n}^{(h-1)}\cdots\mathbf v_{n,m,1}^{(h-1)}\cdot \mathbf v_{n,m,n+1}^{(h-1)}\cdots\mathbf v_{n,m,2n}^{(h-1)}\Bigr)^{2m-1}.
\end{equation}

Comparing the definitions \eqref{eq:v1} and \eqref{eq:vnh}, one readily sees that the word $\mathbf v_{n,m}^{(h)}$ is nothing but the image of $\mathbf v_{n,m}^{(1)}$ under the substitution $x_j \mapsto \mathbf v_{n,m,j}^{(h-1)}$, $j\in\{1,2,\dots,2n\}$.

\begin{remark}
\label{rm:vnh}
It is easy to see that the word $\mathbf v_{n,m}^{(h+r)}$ is the image of the word $\mathbf v_{n,m}^{(h)}$ under a substitution $\zeta$ that sends every variable from $X^{(h)}_{2n}$ to a word obtained from $\mathbf v_{n,m}^{(r)}$ by renaming its variables. Indeed, in terms of the substitutions $\sigma_{2n,j}^{(h)}$ from \eqref{eq:sigma_j}, one can express $\zeta$ as follows:
\[
\zeta(x_{i_1i_2\dots i_h}):=\sigma_{2n,i_h}^{(r+h)}(\cdots(\sigma_{2n,i_2}^{(r+2)}(\sigma_{2n,i_1}^{(r+1)}(\mathbf v_{n,m}^{(r)})))\cdots)\ \text{ for all } i_1,i_2,\dots, i_h\in\{1,2,\dots,2n\}.
\]
Simply put, $\zeta(x_{i_1i_2\dots i_h})$ appends $i_1i_2\dots i_h$ to the indices of all variables of $\mathbf v_{n,m}^{(r)}$.
\end{remark}

For any $n,k\ge 0$ with $n+k>0$ and any $m\ge 1$, we define the following word over $X_{n+k}^{(1)}$:
\[
\mathbf u_{n,k,m}:= x_1x_2\cdots x_{n+k}\,\Bigl(x_nx_{n-1}\cdots x_1\cdot x_{n+1}x_{n+2}\cdots x_{n+k}\Bigr)^{2m-1}.
\]
Notice that $\mathbf u_{n,n,m}=\mathbf v_{n,m}^{(1)}$.

\begin{lemma}
\label{lem:id-BGI}
Let $I$ be a non-empty set, $\mathcal{G}=(G,\cdot)$ a group, and $\mathcal{B}_{G,I}$ the Brandt semigroup over $\mathcal{G}$. For any $m\ge1$ and $n,k\ge 0$ with $n+k>0$ and any substitution $\tau\colon X_{n+k}^{(1)}\to B_{G,I}$, the element $\tau(\mathbf u_{n,k,m})$ lies in a subgroup of $\mathcal{B}_{G,I}$.
\end{lemma}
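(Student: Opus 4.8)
The plan is to translate the statement into a combinatorial claim about the coordinates of the factors. By Lemma~\ref{lem:brandt}, the subgroups of $\mathcal{B}_{G,I}$ are exactly the trivial group $\{0\}$ and the maximal subgroups $G_{ii}=\{(i,g,i)\mid g\in G\}$; hence an element of $\mathcal{B}_{G,I}$ lies in a subgroup if and only if it is $0$ or has the form $(i,g,i)$ with equal outer coordinates. Writing $a_i:=\tau(x_i)$, I would first dispose of the degenerate cases: if some $a_i=0$ then, as every variable occurs in $\mathbf u_{n,k,m}$, the product equals $0$, which lies in $\{0\}$; and if $\tau(\mathbf u_{n,k,m})=0$ for any other reason it again lies in $\{0\}$. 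So I may assume every $a_i=(\ell_i,g_i,r_i)$ is nonzero and that $\tau(\mathbf u_{n,k,m})\neq 0$. Viewing $a_i$ as a directed edge $\ell_i\to r_i$ on the vertex set $I$, the multiplication rule of $\mathcal{B}_{G,I}$ says that a product of the $a_i$ is nonzero precisely when the associated sequence of edges forms a walk, in which case its outer coordinates are the left coordinate of the first factor and the right coordinate of the last factor. Thus the problem reduces to showing that the walk traced by $\mathbf u_{n,k,m}$ is closed.

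Next I would fix coordinates. Nonvanishing of the prefix $x_1x_2\cdots x_{n+k}$ forces $r_i=\ell_{i+1}$, so putting $v_0:=\ell_1$ and $v_i:=r_i$ yields $a_i=(v_{i-1},g_i,v_i)$ and reduces the goal to the single equation $v_0=v_{\mathrm{last}}$, where $v_{\mathrm{last}}$ is the right coordinate of the last variable of $\mathbf u_{n,k,m}$. I would then read off the remaining adjacency conditions forced by nonvanishing. The descending factor $x_nx_{n-1}\cdots x_1$, present because the exponent $2m-1\ge 1$, contributes $v_j=v_{j-2}$ for $2\le j\le n$; hence in the range $0\le j\le n$ the value $v_j$ depends only on the parity of $j$, with $v_j=v_0$ for $j$ even and $v_j=v_1$ for $j$ odd.

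Finally I would close the walk by a short case analysis on the shape $x_1\cdots x_{n+k}\,(x_n\cdots x_1\cdot x_{n+1}\cdots x_{n+k})^{2m-1}$. If $n=0$, then $\mathbf u_{0,k,m}=(x_1\cdots x_k)^{2m}$ and the junction between two consecutive copies of $x_1\cdots x_k$ gives $v_k=v_0=v_{\mathrm{last}}$. If $n\ge1$ and $k\ge1$, then $v_{\mathrm{last}}=v_{n+k}$; the internal junction $x_1x_{n+1}$ gives $v_1=v_n$ and the junction from the prefix into the first block gives $v_{n+k}=v_{n-1}$, and combining these with the parity description yields $v_{n-1}=v_0$ (immediately if $n$ is odd, and via $v_1=v_n=v_0$ if $n$ is even), whence $v_{\mathrm{last}}=v_0$. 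If $n\ge1$ and $k=0$, then $v_{\mathrm{last}}=v_1$ and the junction $x_nx_n$ from the prefix into the first block gives $v_n=v_{n-1}$; as $v_n$ and $v_{n-1}$ have opposite parities, this forces $v_0=v_1=v_{\mathrm{last}}$. In every case $\tau(\mathbf u_{n,k,m})\in G_{v_0v_0}$. I do not anticipate a genuine obstacle: the one conceptual point is the equivalence ``nonzero product $=$ closed walk,'' after which everything is forced; the only care needed is the bookkeeping of which junction conditions are present, together with the edge cases ($n=0$, and $k=0$ versus $k\ge1$) and the parity of $n$.
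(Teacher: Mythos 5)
Your proof is correct and follows essentially the same route as the paper's: both extract the adjacency conditions forced by nonvanishing of the product in $\mathcal{B}_{G,I}$ and then chase coordinate equalities (split by the parity of $n$ and the cases $k=0$ versus $k>0$) to show that the outer coordinates of $\tau(\mathbf u_{n,k,m})$ coincide. Your $v_j$-parametrization and parity bookkeeping merely streamline the paper's explicit ascent/descent chains, and your walk framing absorbs the cases $n\in\{0,1\}$, which the paper instead dispatches separately by observing that $\mathbf u_{0,k,m}$ and $\mathbf u_{1,k,m}$ are $2m$-th powers.
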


\begin{proof}
Lemma~\ref{lem:brandt} implies that for any $a\in B_{G,I}$, the power $a^{2m}$ lies in a subgroup of $\mathcal{B}_{G,I}$. Since $\mathbf u_{0,k,m}=(x_1x_2\cdots x_k)^{2m}$ and  $\mathbf u_{1,k,m}=(x_1x_2\cdots x_{k+1})^{2m}$, the claim holds for $n\in\{0,1\}$.

For the rest of the proof, assume $n\ge2$. If $\tau(\mathbf u_{n,k,m})=0$, there is nothing to prove. Let $\tau(\mathbf u_{n,k,m})\ne 0$. Then all elements $\tau(x_i)$, $i=1,2,\dots,n+k$, must lie in the set $I \times G \times I$ of non-zero elements of $\mathcal{B}_{G,I}$. Hence $\tau(x_i)=(\ell_i,g_i,r_i)$ for some $\ell_i,r_i\in I$ and $g_i\in G$.

First, consider the case $k=0$. Since
\[
\mathbf u_{n,0,m}=x_1x_2\cdots x_n\,\Bigl(x_nx_{n-1}\cdots x_1\Bigr)^{2m-1},
\]
we have $\tau(\mathbf u_{n,0,m})=(\ell_1,g,r_1)$ for some $g\in G$. By Lemma~\ref{lem:brandt}, we have to show that $\ell_1=r_1$. We verify that $\ell_j=r_j$ for all $j=n,n-1,\dots,1$ by backwards induction. Since $x_n^2$ occurs as a factor in the word $\mathbf u_{n,0,m}$, we must have $\tau(x_n^2)\ne 0$, whence $\ell_n=r_n$. If $j>1$, both $x_{j-1}x_j$ and $x_jx_{j-1}$ occur as factors in $\mathbf u_{n,0,m}$. We then have
\begin{align*}
r_{j-1}&=\ell_j&&\text{since $\tau(x_{j-1}x_j)\ne 0$,}\\
       &=r_j&&\text{by the induction assumption,}\\
       &=\ell_{j-1}&&\text{since $\tau(x_jx_{j-1})\ne 0$.}
\end{align*}
Now, let $k>0$. Since
\[
\mathbf u_{n,k,m}= x_1x_2\cdots x_{n+k}\,\Bigl(x_nx_{n-1}\cdots x_1\cdot x_{n+1}x_{n+2}\cdots x_{n+k}\Bigr)^{2m-1},
\]
we have $\tau(\mathbf u_{n,k,m})=(\ell_{1},g,r_{n+k})$ for some $g\in G$. Here we have to show that $\ell_1=r_{n+k}$.  Since $\tau(x_1x_2\cdots x_nx_{n+1})\ne0$ and $\tau(x_{n+k}x_n)\ne 0$, we have
\begin{equation}
\label{eq:ascent}
r_{1}=\ell_{2},\,r_{2}=\ell_{3},\dots,r_{n-1}=\ell_{n},\,r_{n}=\ell_{n+1}\ \text{ and  }\ r_{n+k}=\ell_{n}.
\end{equation}
Since $\tau(x_nx_{n-1}\cdots x_1x_{n+1})\ne 0$, we also have
\begin{equation}
\label{eq:descent}
r_{n}=\ell_{n-1},\,r_{n-1}=\ell_{n-2},\dots,r_{2}=\ell_{1},\,r_{1}=\ell_{n+1}.
\end{equation}
Therefore, we obtain
\[
r_{n+k}\stackrel{\eqref{eq:ascent}}{=}\ell_{n}\stackrel{\eqref{eq:ascent}}{=}r_{n-1}\stackrel{\eqref{eq:descent}}{=}\ell_{n-2}\stackrel{\eqref{eq:ascent}}{=}r_{n-3}=\dots=\begin{cases}
\ell_{1}&\text{if $n$ is odd},\\
r_{1}&\text{if $n$ is even}.
\end{cases}
\]
In addition, if $n$ is even, then
\[
r_{1}\stackrel{\eqref{eq:descent}}{=}\ell_{n+1}\stackrel{\eqref{eq:ascent}}{=}r_{n}\stackrel{\eqref{eq:descent}}{=}\ell_{n-1}\stackrel{\eqref{eq:ascent}}{=}r_{n-2}\stackrel{\eqref{eq:descent}}{=}\ell_{n-3}=\dots=\ell_{1}.
\]
We see that $r_{n+k}=\ell_{1}$ in either case.
\end{proof}

Several statements in this section deal with semigroups that possess Brandt series and have subgroups subject to certain restrictions. To keep the premises of these statements compact, we introduce a few short names. In the sequel:
\begin{itemize}
  \item groups of exponent dividing $m$ are called $m$-\emph{groups};
  \item solvable $m$-groups of derived length at most $k$ are called $[m,k]$-\emph{groups};
  \item a semigroup with a Brandt series of height $h$ all of whose subgroups are $m$-groups (resp., $[m,k]$-groups) is called an $(h,m)$-\emph{semigroup} (resp., an $(h,m,k)$-\emph{semigroup}).
\end{itemize}

\begin{lemma}
\label{lem:periodic}
Any $(h,m)$-semigroup satisfies the identity $x^{2^hm}= x^{2^{h+1}m}$.
\end{lemma}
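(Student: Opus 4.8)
The plan is to prove the identity $x^{2^hm}=x^{2^{h+1}m}$ by induction on the height $h$ of the Brandt series, exploiting the recursive structure of the series together with Lemma~\ref{lem:brandt}. The intuition is transparent: each layer of the Brandt series forces a ``doubling'' of the exponent needed to land in a subgroup (because in a Brandt semigroup one must square an element to enter a maximal subgroup, by Lemma~\ref{lem:brandt}), so with $h$ layers one expects $2^h$ to appear, and the factor $m$ comes from the exponent of the subgroups, which are $m$-groups.

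\textbf{Base case.} For $h=1$ the semigroup $(S,\cdot)$ has a Brandt series $S_0\subset S_1=S$, so $S_0$ is a group and the Rees quotient $S/S_0$ is either a Brandt semigroup or a zero semigroup. I would argue that for any $a\in S$, the power $a^{2}$ already lies in a subgroup: if $a\in S_0$ this is clear since $S_0$ is a group, and otherwise I would pass to the image $\bar a$ in $S/S_0$ and apply Lemma~\ref{lem:brandt} to conclude that $\bar a^{2m}$ (indeed already $\bar a^2$) lands in a maximal subgroup or is $0$. Since the subgroups are $m$-groups, once $a^{2m}$ sits in a subgroup we get $a^{2m}=a^{4m}$, i.e.\ $x^{2m}=x^{4m}$, which is the desired identity for $h=1$.

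\textbf{Inductive step.} Assume the claim for semigroups with Brandt series of height $h-1$. Given an $(h,m)$-semigroup $(S,\cdot)$ with series $S_0\subset\dots\subset S_h=S$, the truncated chain $S_0\subset\dots\subset S_{h-1}$ is a Brandt series of height $h-1$ for the ideal $S_{h-1}$, so by the induction hypothesis $S_{h-1}$ satisfies $x^{2^{h-1}m}=x^{2^{h}m}$. Now fix $a\in S$. If $a\in S_{h-1}$ we are done by the induction hypothesis (with room to spare). Otherwise I would consider the image $\bar a$ of $a$ in the top factor $S_h/S_{h-1}$: this factor is a Brandt semigroup or a zero semigroup, so by Lemma~\ref{lem:brandt} the element $\bar a^{2}$ lies in a subgroup of the quotient or equals $0$. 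Translating back, this means $a^{2}$ either lies in a subgroup of $S_h\setminus S_{h-1}$ or drops into the ideal $S_{h-1}$. In the first case, $a^2$ lies in an $m$-group and periodicity already pins down the exponent cheaply; the interesting case is the second, where $a^2\in S_{h-1}$, and then the induction hypothesis applied to $a^2$ yields $(a^2)^{2^{h-1}m}=(a^2)^{2^{h}m}$, that is $a^{2^{h}m}=a^{2^{h+1}m}$, as required.

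\textbf{The main obstacle} I anticipate is handling the \emph{descent into the ideal} cleanly and uniformly, i.e.\ making precise the claim that whenever $a\notin S_{h-1}$, the power $a^2$ either enters a subgroup at the top level or falls into $S_{h-1}$, and that either outcome leads to the bound $2^hm$. The subtlety is that the maximal subgroup at the top level might not be an $m$-group in isolation from the series data (though the hypothesis guarantees all subgroups are $m$-groups), and that one must rule out the element ``oscillating'' without ever entering a subgroup or the ideal. I would resolve this by invoking Lemma~\ref{lem:brandt} in the form that $\bar a^2$ is either $0$ or lies in a maximal subgroup $G_{\ell\ell}$ of the quotient: if $\bar a^2=0$ then $a^2\in S_{h-1}$ and the induction hypothesis finishes the job; if $\bar a^2\neq 0$ then $a^2$ is (the preimage of) a group element in an $m$-group, so $a^{2m}$ is idempotent and $a^{2m}=a^{4m}$, giving a far stronger bound than needed. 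Uniformly combining the two cases gives the stated exponent $2^hm$, and I would double-check that the weaker of the two bounds is always $2^hm$ so that a single identity covers every element.
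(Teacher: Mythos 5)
Your proposal is correct and follows essentially the same route as the paper's proof: induction on the height $h$, using Lemma~\ref{lem:brandt} to show that squaring an element either lands it in a subgroup (an $m$-group, forcing the exponent identity) or drops it into the ideal below, where the induction hypothesis applies. The only cosmetic differences are that the paper starts the induction at $h=0$ (where the semigroup is itself an $m$-group) and organizes the dichotomy as ``$a$ lies in a subgroup of $S$ or not,'' rather than splitting first on membership in $S_{h-1}$.
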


\begin{proof}
Let $(S,\cdot)$ be an $(h,m)$-semigroup and \eqref{eq:series} its Brandt series. We induct on $h$. If $h=0$, then $(S,\cdot)$ is an $m$-group, and any $m$-group satisfies $x^m=x^{2m}$.

Let $h>0$ and let $a$ be an arbitrary element in $S$; we have to verify that
\begin{equation}
\label{eq:period}
a^{2^hm}= a^{2^{h+1}m}.
\end{equation}
If $a$ lies in a subgroup of $(S,\cdot)$, then $a^m=a^{2m}$ since all subgroups of $(S,\cdot)$ are $m$-groups. Squaring this $h$ times gives \eqref{eq:period}. If $a$ lies in no subgroup of $(S,\cdot)$, then $a^2\in S_{h-1}$ --- this is clear if $a\in S_{h-1}$ or the factor $(S_h/S_{h-1},\cdot)$ is a zero semigroup and follows from Lemma~\ref{lem:brandt} if $(S_h/S_{h-1},\cdot)$ is a Brandt semigroup. By the induction assumption, $(a^2)^{2^{h-1}m}= (a^2)^{2^hm}$, that is, \eqref{eq:period} holds again.
\end{proof}

Recall that if $(S,\cdot)$ is a semigroup, $(\langle E(S)\rangle,\cdot)$ stands for its subsemigroup generated by the set $E(S)$ of all idempotents of $(S,\cdot)$.

\begin{lemma}
\label{lem:E(S)}
Let $(S,\cdot)$ be an $(h,m)$-semigroup. For any $k\ge 1$, if a substitution $\tau\colon X_{2n}^{(k)}\to S$ is such that $\tau(x_{i_1i_2\dots i_k})\in\langle E(S)\rangle$ for all $i_1,i_2,\dots,i_k\in\{1,2,\dots,2n\}$, then $\tau(\mathbf v_{n,2^hm}^{(k)})\in E(S)$.
\end{lemma}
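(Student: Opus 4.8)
The plan is to work entirely inside the subsemigroup $(\langle E(S)\rangle,\cdot)$ and to exploit that it is $\mathscr{J}$-trivial. Write $M:=2^hm$, so that the target word is $\mathbf v_{n,M}^{(k)}$. First I would record the ambient facts: since $S$ possesses a Brandt series it is a block-group by Lemma~\ref{lem:bg_series}, and by Lemma~\ref{lem:periodic} it satisfies $x^M=x^{2M}$, hence is periodic. Thus Proposition~\ref{prop:block-group} applies and shows that $(\langle E(S)\rangle,\cdot)$ is $\mathscr{J}$-trivial, and as a subsemigroup it inherits the identity $x^M=x^{2M}$. I would then promote this to $x^M=x^{M+1}$: for $a\in\langle E(S)\rangle$ the equalities $a^{M+1}=a\cdot a^M$ and $a^M=a^{2M}=a^{M-1}\cdot a^{M+1}$ show that $a^M$ and $a^{M+1}$ divide one another, so they generate the same principal ideal of $\langle E(S)\rangle$; by $\mathscr{J}$-triviality $a^M=a^{M+1}$. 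Hence $(\langle E(S)\rangle,\cdot)$ is $\mathscr{J}$-trivial and satisfies $x^M=x^{M+1}$, which is precisely the setting in which Lemma~\ref{lem:j-triv} may be invoked with $p=M$.

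Next I would prove the base case $k=1$. By \eqref{eq:v1}, $\mathbf v_{n,M}^{(1)}=A\,B^{2M-1}$ with $A:=x_1x_2\cdots x_{2n}$ and $B:=x_nx_{n-1}\cdots x_1\cdot x_{n+1}\cdots x_{2n}$; crucially, $A$ is the product of the variables occurring in $B$, each taken exactly once. Applying Lemma~\ref{lem:j-triv} to the word $B$ with this choice of $u=A$ shows that $\langle E(S)\rangle$ satisfies $B^M=A^M$. Put $g:=\tau(A)=\tau(x_1)\cdots\tau(x_{2n})\in\langle E(S)\rangle$; then $\tau(B)^M=g^M$, and since $\tau(B)\in\langle E(S)\rangle$ and $2M-1\ge M$, the identity $x^M=x^{M+1}$ collapses the high powers, giving $\tau(B)^{2M-1}=\tau(B)^M=g^M$. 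Therefore
\[
\tau(\mathbf v_{n,M}^{(1)})=\tau(A)\,\tau(B)^{2M-1}=g\cdot g^M=g^{M+1}=g^M,
\]
and $g^M=g^{2M}=(g^M)^2$, so $g^M\in E(S)$, settling $k=1$. Note that only the \emph{content} of $B$ and the shape of $A$ entered, not the order reversals inside $B$.

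For the inductive step I would use Remark~\ref{rm:vnh}, which expresses $\mathbf v_{n,M}^{(k)}$ as the image of $\mathbf v_{n,M}^{(1)}$ under the substitution $x_j\mapsto\mathbf v_{n,M,j}^{(k-1)}=\sigma_{2n,j}^{(k)}(\mathbf v_{n,M}^{(k-1)})$. For each $j\in\{1,\dots,2n\}$ set $\tau_j:=\tau\circ\sigma_{2n,j}^{(k)}\colon X_{2n}^{(k-1)}\to S$; by hypothesis $\tau_j$ takes all its values in $\langle E(S)\rangle$, so the induction assumption yields $e_j:=\tau_j(\mathbf v_{n,M}^{(k-1)})=\tau(\mathbf v_{n,M,j}^{(k-1)})\in E(S)$. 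Consequently $\tau(\mathbf v_{n,M}^{(k)})=\rho(\mathbf v_{n,M}^{(1)})$, where $\rho\colon X_{2n}^{(1)}\to S$ is given by $\rho(x_j):=e_j\in E(S)\subseteq\langle E(S)\rangle$. The already established case $k=1$, applied to the substitution $\rho$, gives $\rho(\mathbf v_{n,M}^{(1)})\in E(S)$, completing the induction.

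The only step that is not a direct citation is the passage from $x^M=x^{2M}$ to $x^M=x^{M+1}$ inside $\langle E(S)\rangle$; this is where $\mathscr{J}$-triviality does the real work, and it is the prerequisite that unlocks Lemma~\ref{lem:j-triv}. I expect this to be the main (if short) obstacle, while everything afterwards is the elementary power computation of the base case together with a mechanical descent on $k$ through Remark~\ref{rm:vnh}.
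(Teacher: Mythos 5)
Your proof is correct and takes essentially the same approach as the paper's: the same chain of Lemma~\ref{lem:bg_series}, Lemma~\ref{lem:periodic}, and Proposition~\ref{prop:block-group} to get $\mathscr{J}$-triviality and $x^{2^hm}=x^{2^hm+1}$ in $(\langle E(S)\rangle,\cdot)$, followed by Lemma~\ref{lem:j-triv} to reorder the base of the high power and then collapse exponents. The only difference is organizational: you induct on $k$ and pass through idempotency of the values $\tau(\mathbf v_{n,2^hm,j}^{(k-1)})$, whereas the paper avoids induction by running the identical power computation once for all $k$ with $a_j:=\tau(\mathbf v_{n,2^hm,j}^{(k-1)})$, using only that these values lie in $\langle E(S)\rangle$ rather than in $E(S)$.
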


\begin{proof}
By Lemma~\ref{lem:periodic} the semigroup $(S,\cdot)$ satisfies $x^{2^hm} = x^{2^{h+1}m}$ whence for each $p\ge 2^hm$ and each $a\in \langle E(S)\rangle$, the elements $a^p$ and $a^{p+1}$ generate the same ideal in the subsemigroup $(\langle E(S)\rangle,\cdot)$. The periodic semigroup $(S,\cdot)$ is a block-group by Lemma~\ref{lem:bg_series} whence the subsemigroup $(\langle E(S)\rangle,\cdot)$ is $\mathscr{J}$-trivial by Proposition \ref{prop:block-group}. Therefore, we have $a^p=a^{p+1}$, i.e., $(\langle E(S)\rangle,\cdot)$ satisfies the identity $x^p=x^{p+1}$.

Letting $a_j:=\begin{cases}\tau(x_j)&\text{if $k=1$},\\
\tau(\mathbf v_{n,2^hm,j}^{(k-1)})&\text{if $k>1$},
\end{cases}$ for $j=1,2,\dots,2n$, we have from \eqref{eq:v1} and \eqref{eq:vnh}
\begin{align*}
\tau(\mathbf v_{n,2^hm}^{(k)})&=a_1a_2\cdots a_{2n}\,\Bigl(a_na_{n-1}\cdots a_1\cdot a_{n+1}a_{n+2}\cdots a_{2n}\Bigr)^{2^{h+1}m-1}&&\\
&=a_1a_2\cdots a_{2n}\,\Bigl(a_1\cdots a_{n-1}a_n\cdot a_{n+1}a_{n+2}\cdots a_{2n}\Bigr)^{2^{h+1}m-1}&&\text{by Lemma~\ref{lem:j-triv}}\\
&=\Bigl(a_1\cdots a_{n-1}a_n\cdot a_{n+1}a_{n+2}\cdots a_{2n}\Bigr)^{2^{h+1}m}&&\\
&=\Bigl(a_1\cdots a_{n-1}a_n\cdot a_{n+1}a_{n+2}\cdots a_{2n}\Bigr)^{2^{h}m}&&\text{as $x^{2^hm} = x^{2^{h+1}m}$}\\
&=\left(\Bigl(a_1\cdots a_{n-1}a_n\cdot a_{n+1}a_{n+2}\cdots a_{2n}\Bigr)^{2^{h}m}\right)^2= \Bigl(\tau(\mathbf v_{n,2^hm}^{(k)})\Bigr)^2.&&
\end{align*}
Hence $\tau(\mathbf v_{n,2^hm}^{(k)})\in E(S)$ as required.
\end{proof}

\begin{lemma}
\label{lem:id-E(S)cupD1}
Let $(S,\cdot)$ be an $(h,m)$-semigroup with Brandt series \eqref{eq:series}. If $(S_1,\cdot)$ is a Brandt semigroup and $S=\langle E(S)\rangle\cup S_1$, then for an arbitrary substitution $\tau\colon X_{2n}^{(1)}\to S$, the element $\tau(\mathbf v_{n,2^hm}^{(1)})$ lies in a subgroup of $S$.
\end{lemma}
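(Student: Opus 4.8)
The plan is to split on where the substitution sends the variables. Write $a_i:=\tau(x_i)$ for $i=1,\dots,2n$; since $S=\langle E(S)\rangle\cup S_1$, each $a_i$ lies in $\langle E(S)\rangle$ or in the Brandt ideal $S_1$. If every $a_i$ lies in $\langle E(S)\rangle$, then Lemma~\ref{lem:E(S)} (with $k=1$) gives $\tau(\mathbf v_{n,2^hm}^{(1)})\in E(S)$, and every idempotent lies in its maximal subgroup, so we are done. Hence I may assume some $a_i\in S_1$. As $S_1$ is an ideal, $\tau(\mathbf v_{n,2^hm}^{(1)})\in S_1$; if this element is $0$ it lies in the trivial subgroup and we are done, so assume it is nonzero. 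Writing $S_1=\mathcal{B}_{G,I}$ and $\tau(\mathbf v_{n,2^hm}^{(1)})=(\ell,g,r)$, Lemma~\ref{lem:brandt} reduces the task to showing $\ell=r$.

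The main idea is to delete the variables sent into $\langle E(S)\rangle$ and reduce to a word handled by Lemma~\ref{lem:id-BGI}. First I record a two-sided strengthening of Lemma~\ref{lem:fd}. Since $(S,\cdot)$ is a periodic block-group (Lemmas~\ref{lem:periodic} and~\ref{lem:bg_series}), Lemma~\ref{lem:fd} applies; by left--right duality (both block-groups and Brandt semigroups are closed under reversal) one also gets $bf\in\{b,0\}$ for $b\in S_1$, $f\in E(S)$, and iterating over a product of idempotents yields $cb,\,bc\in\{b,0\}$ for every $c\in\langle E(S)\rangle$ and $b\in S_1$. Now let $P:=\{i:a_i\in S_1\}$, which is nonempty by assumption, and evaluate $\tau(\mathbf v_{n,2^hm}^{(1)})$ as a product of its letters from left to right. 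Because the total product is nonzero, no prefix product can vanish; each maximal run of letters with index outside $P$ contributes a factor lying in $\langle E(S)\rangle$ that sits beside a nonzero element of $S_1$, and the dichotomy $cb,bc\in\{b,0\}$ forces every such factor to act as the identity. Deleting all these letters therefore leaves the value unchanged, so $\tau(\mathbf v_{n,2^hm}^{(1)})$ equals the product, in order, of the letters indexed by $P$.

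It remains to identify this reduced product. Recalling $\mathbf v_{n,2^hm}^{(1)}=\mathbf u_{n,n,2^hm}$, a direct inspection shows that erasing the variables with index outside $P$ turns it, after relabelling, into $\mathbf u_{n',k',2^hm}$, where $n'=|P\cap\{1,\dots,n\}|$ and $k'=|P\cap\{n+1,\dots,2n\}|$: the first $n$ variables occupy the reversed block and the last $n$ the forward block, so deletion preserves these orders, and $n'+k'=|P|>0$. Thus $\tau(\mathbf v_{n,2^hm}^{(1)})=\tau'(\mathbf u_{n',k',2^hm})$ for the substitution $\tau'$ into $\mathcal{B}_{G,I}$ obtained by restricting $\tau$ to the surviving variables, and Lemma~\ref{lem:id-BGI} (with parameters $n',k',2^hm$) shows this element lies in a subgroup of $S_1$, hence of $S$. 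The step I expect to be the crux is the reduction in the second paragraph: one must make precise, by tracking prefixes, that the nonvanishing of the whole product propagates so that each $\langle E(S)\rangle$-factor is absorbed rather than annihilating, and one must justify the two-sided form of Lemma~\ref{lem:fd} so that both the leading and trailing idempotent runs (the word begins and ends with arbitrary variables) can be removed.
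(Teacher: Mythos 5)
Your proof is correct and follows essentially the same route as the paper: split on whether all variables land in $\langle E(S)\rangle$ (handled by Lemma~\ref{lem:E(S)}), otherwise use Lemma~\ref{lem:fd} and its dual to absorb the idempotent-generated factors, identify the surviving product as $\mathbf u_{p,q,2^hm}$ evaluated in $S_1$, and finish with Lemma~\ref{lem:id-BGI}. The only difference is that you spell out the absorption/prefix-tracking argument and the iteration of Lemma~\ref{lem:fd} to products of idempotents, details the paper leaves implicit.
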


\begin{proof}
If $\tau(x_k)\in \langle E(S)\rangle$ for all $k\in\{1,2,\dots,2n\}$, then $\tau(\mathbf v_{n,2^hm}^{(1)})\in E(S)$ by Lemma \ref{lem:E(S)}. Otherwise let $\{k_1,k_2,\dots,k_{p+q}\}$ with
\[
1\le k_1<k_2<\dots<k_p\le n< k_{p+1}<k_{p+2}<\dots<k_{p+q}\le 2n
\]
be the set of all indices $k$ such that $\tau(x_k)\notin \langle E(S)\rangle$. (Here $p=0$ or $q=0$ is possible but $p+q>0$.) Since $S=\langle E(S)\rangle\cup S_1$, we have $\tau(x_{k_1}),\tau(x_{k_2}),\dots,\tau(x_{k_{p+q}})\in S_1$. By Lemma~\ref{lem:fd} and its dual, either $\tau(\mathbf v_{n,2^hm}^{(1)})=0$ or removing all $\tau(x_k)$ such that $\tau(x_k)\in \langle E(S)\rangle$ does not change the value of $\tau(\mathbf v_{n,2^hm}^{(1)})$. If $\tau(\mathbf v_{n,2^hm}^{(1)})=0$, the claim holds. Otherwise, consider the substitution $\tau'\colon X_{p+q}^{(1)}\to S_1$ given by $\tau'(x_s):=\tau(x_{k_s})$ for all $s\in\{1,2,\dots,p+q\}$. Then $\tau(\mathbf v_{n,2^hm}^{(1)})=\tau'(\mathbf u_{p,q,2^hm})$. By Lemma~\ref{lem:id-BGI}, the element $\tau'(\mathbf u_{p,q,2^hm})$ lies in a subgroup of the Brandt semigroup $(S_1,\cdot)$. Therefore, $\tau(\mathbf v_{n,2^hm}^{(1)})$ belongs to a subgroup of $(S,\cdot)$.
\end{proof}

\begin{lemma}
\label{lem:id-G}
Any $[m,k]$-group satisfies the identity $\mathbf v_{n,m}^{(k)}=1$.
\end{lemma}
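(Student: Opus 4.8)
The plan is to prove a stronger statement by induction on $h$, from which the lemma follows immediately by specialization. Namely, I claim that for every group $(G,\cdot)$ of exponent dividing $m$, every $h\ge 1$, and every substitution $\tau\colon X_{2n}^{(h)}\to G$, the element $\tau(\mathbf v_{n,m}^{(h)})$ lies in the $h$-th derived subgroup $G^{(h)}$ of $G$ (where $G^{(0)}:=G$ and $G^{(j)}:=[G^{(j-1)},G^{(j-1)}]$). Granting this, the lemma is immediate: a $[m,k]$-group $G$ has exponent dividing $m$ and derived length at most $k$, so $G^{(k)}=\{1\}$; hence every value of $\mathbf v_{n,m}^{(k)}$ in $G$ equals $1$, which is exactly the identity $\mathbf v_{n,m}^{(k)}=1$.

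For the base case $h=1$, I would compute modulo the commutator subgroup $[G,G]$, i.e.\ in the abelianization $\overline G:=G/[G,G]$. Writing $c_i:=\tau(x_i)$ and passing to $\overline G$, the reversed factor $x_nx_{n-1}\cdots x_1$ and the forward factor $x_1x_2\cdots x_n$ have the same image because $\overline G$ is commutative; consequently the bracketed block $x_nx_{n-1}\cdots x_1\cdot x_{n+1}\cdots x_{2n}$ and the initial block $x_1x_2\cdots x_{2n}$ map to the same element $\overline c:=\overline{c_1}\cdots\overline{c_{2n}}$ of $\overline G$. Therefore $\tau(\mathbf v_{n,m}^{(1)})$ maps to $\overline c\cdot\overline c^{\,2m-1}=\overline c^{\,2m}$, which is trivial because $\overline G$, being a quotient of $G$, has exponent dividing $m$ and hence dividing $2m$. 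Thus $\tau(\mathbf v_{n,m}^{(1)})\in[G,G]=G^{(1)}$.

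For the inductive step I would exploit the observation recorded after \eqref{eq:vnh}: the word $\mathbf v_{n,m}^{(h)}$ is the image of $\mathbf v_{n,m}^{(1)}$ under the substitution $x_j\mapsto\mathbf v_{n,m,j}^{(h-1)}$. Put $b_j:=\tau(\mathbf v_{n,m,j}^{(h-1)})$ for $j=1,\dots,2n$. Since $\mathbf v_{n,m,j}^{(h-1)}=\sigma_{2n,j}^{(h)}(\mathbf v_{n,m}^{(h-1)})$ merely renames the variables of $\mathbf v_{n,m}^{(h-1)}$, each $b_j$ is a value of $\mathbf v_{n,m}^{(h-1)}$ under a suitable substitution into $G$, so the induction hypothesis yields $b_j\in G^{(h-1)}$. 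Now $\tau(\mathbf v_{n,m}^{(h)})$ is exactly the base-case word $x_1\cdots x_{2n}(x_n\cdots x_1\cdot x_{n+1}\cdots x_{2n})^{2m-1}$ evaluated at the elements $b_1,\dots,b_{2n}$ of the subgroup $N:=G^{(h-1)}$. As $N$ is a subgroup of $G$, it too has exponent dividing $m$, so the base case applied inside $N$ gives $\tau(\mathbf v_{n,m}^{(h)})\in[N,N]=[G^{(h-1)},G^{(h-1)}]=G^{(h)}$, completing the induction.

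The only point requiring care is the choice of the inductive invariant---membership in the successive derived subgroups---together with the base-case collapse, where one must check that the order-reversed factor $x_n\cdots x_1$ becomes invisible modulo $[G,G]$ so that the word degenerates into a $2m$-th power. Once this is set up, the recursive definition of the words does all the remaining work: the inductive step is a verbatim re-application of the base case inside the derived subgroup $G^{(h-1)}$, so no separate computation is needed there.
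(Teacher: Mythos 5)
Your proof is correct. It follows the same inductive skeleton as the paper's proof---induction on $h$ with the invariant that every value of $\mathbf v_{n,m}^{(h)}$ lies in the $h$-th derived subgroup $G^{(h)}$, then specializing to $h=k$---but you execute the two halves of the induction differently, and both differences are sound. For the base case, the paper uses $a^{2m-1}=a^{-1}$ to reduce $\tau(\mathbf v_{n,m}^{(1)})$ to $a_1\cdots a_n\,a_1^{-1}\cdots a_n^{-1}$ as in \eqref{eq:commutator} and then exhibits this element explicitly as a product of commutators via the identity \eqref{eq:prodcommutator}; you instead pass to the abelianization $G/[G,G]$, where the order-reversed block becomes indistinguishable from the forward block, so the whole word collapses to a $2m$-th power and hence to the identity. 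Your version trades the explicit commutator factorization (which requires a small verification) for a softer quotient argument that needs none. For the inductive step, the paper repeats the commutator computation with the elements $\theta(\mathbf v_{n,m,j}^{(h-1)})\in G^{(h-1)}$ in place of the $a_i$; you observe instead that, by the remark following \eqref{eq:vnh}, the step is literally the base case applied inside the subgroup $N:=G^{(h-1)}$ (which inherits exponent dividing $m$), so no second computation is needed. This reuse is the cleaner organization; the paper's repetition has the minor virtue of displaying the product-of-commutators form at every level, which makes the membership $\theta(\mathbf v_{n,m}^{(h)})\in G^{(h)}$ visible without quantifying the base case over all groups of exponent dividing $m$---a generality your argument correctly builds in from the start.
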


\begin{proof}
Let $\mathcal{G}=(G,\cdot)$ be an $[m,k]$-group and $a \in G$.  Since $a^m$ is  the identity element of $\mathcal{G}$, we have $a^{2m-1}=a^{-1}$. Consider any substitution $\tau\colon X_{2n}^{(1)}\to G$ and let $a_i:=\tau(x_i)$. Then
\begin{equation}
\label{eq:commutator}
\tau(\mathbf v_{n,m}^{(1)})=a_1a_2\cdots a_{2n}\,\Bigl(a_na_{n-1}\cdots a_1\cdot a_{n+1}a_{n+2}\cdots a_{2n}\Bigr)^{-1}=a_1a_2\cdots a_n\cdot a_1^{-1}a_2^{-1}\cdots a_n^{-1}.
\end{equation}
Recall that the \emph{commutator} of elements $a$ and $b$ of a group is defined as $[a,b]:=a^{-1}b^{-1}ab$. It is easy to verify that the right-hand side of \eqref{eq:commutator} can be written as a product of commutators, namely,
\begin{equation}
\label{eq:prodcommutator}
a_1a_2\cdots a_n\cdot a_1^{-1}a_2^{-1}\cdots a_n^{-1}=[a_1^{-1},a_2^{-1}][(a_2a_1)^{-1},a_3^{-1}]\cdots [(a_{n-1}a_{n-2}\cdots a_1)^{-1},a_n^{-1}].
\end{equation}
Therefore, $\tau(\mathbf v_{n,m}^{(1)})$ belongs to the derived subgroup $(G^{(1)},\cdot)$ of $\mathcal{G}$. We use this as the induction basis and show by induction on $h$ that any substitution $\theta\colon X_{2n}^{(h)}\to G$ sends the word $\mathbf v_{n,m}^{(h)}$ to the $h$-th derived subgroup $(G^{(h)},\cdot)$ of $\mathcal{G}$.

Indeed, let $h>1$. Since for each $j\in\{1,2,\dots,2n\}$, the word $\mathbf v_{n,m,j}^{(h-1)}$ is obtained from the word $\mathbf v_{n,m}^{(h-1)}$ by renaming its variables, $\theta(\mathbf v_{n,m,j}^{(h-1)})\in G^{(h-1)}$ by the induction assumption. Combining \eqref{eq:vnh} and \eqref{eq:commutator}, we see that
\[
\theta(\mathbf v_{n,m}^{(h)})=\theta(\mathbf v_{n,m,1}^{(h-1)})\theta(\mathbf v_{n,m,2}^{(h-1)})\cdots\theta(\mathbf v_{n,m,n}^{(h-1)})\cdot(\theta(\mathbf v_{n,m,1}^{(h-1)}))^{-1}(\theta(\mathbf v_{n,m,2}^{(h-1)}))^{-1}\cdots(\theta(\mathbf v_{n,m,n}^{(h-1)}))^{-1}.
\]
Now rewriting the last expression as in \eqref{eq:prodcommutator}, we see that $\theta(\mathbf v_{n,m}^{(h)})$ is a product of commutators of elements from $G^{(h-1)}$. Hence $\theta(\mathbf v_{n,m}^{(h)})\in G^{(h)}$. Since the group $\mathcal{G}$ is solvable of derived length at most $k$, its $k$-th derived subgroup is trivial, whence the word $\mathbf v_{n,m}^{(k)}$ is sent to  the identity element of $\mathcal{G}$ by any substitution $X_{2n}^{(k)}\to G$. This means that $\mathcal{G}$ satisfies the identity $\mathbf v_{n,m}^{(k)}= 1$.
\end{proof}

\begin{lemma}
\label{lem:id-E(S)cupD2}
Let $(S,\cdot)$ be an $(h,m,k)$-semigroup and \eqref{eq:series} its Brandt series. If $S_0=\{0\}$ and $S=\langle E(S)\rangle\cup S_1$, then $(S,\cdot)$ satisfies the identity $\mathbf v_{n,2^hm}^{(k+1)} = (\mathbf v_{n,2^hm}^{(k+1)})^2$.
\end{lemma}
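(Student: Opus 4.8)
The plan is to fix an arbitrary substitution $\tau\colon X_{2n}^{(k+1)}\to S$ and prove that $w:=\tau(\mathbf v_{n,2^hm}^{(k+1)})$ is idempotent, which is precisely the claimed identity. Since $S_0=\{0\}$, the first factor $S_1/S_0$ of the Brandt series \eqref{eq:series} is isomorphic to $S_1$, so $(S_1,\cdot)$ is either a zero semigroup or a Brandt semigroup; I would dispose of these two cases in turn. Throughout I use that $(S,\cdot)$ is a periodic block-group, by Lemmas~\ref{lem:periodic} and~\ref{lem:bg_series}, so that Lemmas~\ref{lem:ef-idempotent} and~\ref{lem:fd} are available.

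Suppose first that $(S_1,\cdot)$ is a zero semigroup. If every $\tau(x_{i_1\dots i_{k+1}})$ lies in $\langle E(S)\rangle$, then $w\in E(S)$ by Lemma~\ref{lem:E(S)}. Otherwise some variable is sent into $S_1\setminus\langle E(S)\rangle$, as $S=\langle E(S)\rangle\cup S_1$. The element $0$ is a two-sided zero of $S$ because it generates the least ideal $S_0$, so if a value equals $0$ then $w=0$. If instead a value is a non-zero element of $S_1$, then, since each variable occurs at least twice in $\mathbf v_{n,2^hm}^{(k+1)}$, the product $w$ contains at least two factors from the ideal $S_1$; as $S_1\cdot S_1=\{0\}$, this again yields $w=0$. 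In every subcase $w$ is idempotent.

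Now suppose $(S_1,\cdot)$ is a Brandt semigroup, say $S_1=\mathcal{B}_{G,I}$. By Remark~\ref{rm:vnh}, applied with $k$ and $1$ in place of $h$ and $r$, the word $\mathbf v_{n,2^hm}^{(k+1)}$ is the image of $\mathbf v_{n,2^hm}^{(k)}$ under a substitution $\zeta$ that replaces each variable of $X_{2n}^{(k)}$ by a renamed copy of $\mathbf v_{n,2^hm}^{(1)}$. Hence $w=\tau'(\mathbf v_{n,2^hm}^{(k)})$, where $\tau':=\tau\circ\zeta$ sends each variable of $X_{2n}^{(k)}$ to the $\tau$-value of a copy of $\mathbf v_{n,2^hm}^{(1)}$. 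By Lemma~\ref{lem:id-E(S)cupD1}, every value of $\tau'$ lies in a subgroup of $S$ and is therefore regular; if it lies in $\langle E(S)\rangle$ it is an idempotent by Lemma~\ref{lem:ef-idempotent}, and otherwise it lies in $S_1$ and hence, by Lemma~\ref{lem:brandt}, in a maximal subgroup $G_{\ell\ell}$, so it has the form $(\ell,g,\ell)$.

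It remains to evaluate $\tau'(\mathbf v_{n,2^hm}^{(k)})$. If all values of $\tau'$ are idempotents, then $w\in E(S)$ by Lemma~\ref{lem:E(S)}. Otherwise, assume $w\neq 0$. Repeatedly applying Lemma~\ref{lem:fd} and its dual, every idempotent factor of the product is absorbed by a neighbouring $S_1$-factor without changing the value, while two $S_1$-factors $(\ell,g,\ell)$ and $(\ell',g',\ell')$ that become adjacent multiply to a non-zero element only if $\ell=\ell'$; thus all variables whose $\tau'$-value lies in $S_1$ share a single index $\ell^*$, and $w=(\ell^*,P,\ell^*)$, where $P$ is the ordered product in $G$ of their group components. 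Setting $\bar\tau\colon X_{2n}^{(k)}\to G\cong G_{\ell^*\ell^*}$ to send each such variable to its component $g$ and each idempotent-valued variable to the identity of $G$, I obtain $w=(\ell^*,\bar\tau(\mathbf v_{n,2^hm}^{(k)}),\ell^*)$. As every subgroup of the $(h,m,k)$-semigroup $S$ is an $[m,k]$-group and hence an $[2^hm,k]$-group, Lemma~\ref{lem:id-G} gives $\bar\tau(\mathbf v_{n,2^hm}^{(k)})=1$, whence $w=(\ell^*,1,\ell^*)\in E(S)$. The delicate point, and the one I expect to require the most care, is exactly this last reduction: the values supplied by Lemma~\ref{lem:id-E(S)cupD1} may a priori lie in different maximal subgroups and be separated by idempotents, and the argument above is what collapses a non-zero product into an honest computation inside one $[2^hm,k]$-group, where Lemma~\ref{lem:id-G} can be invoked.
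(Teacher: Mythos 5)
Your proof is correct and follows essentially the same route as the paper's: the same decomposition of $\mathbf v_{n,2^hm}^{(k+1)}$ into renamed copies of $\mathbf v_{n,2^hm}^{(1)}$ via Remark~\ref{rm:vnh}, the same case split on whether $(S_1,\cdot)$ is a zero or a Brandt semigroup, the same absorption of idempotent factors via Lemma~\ref{lem:fd} and its dual, and the same final reduction to Lemma~\ref{lem:id-G} inside a single maximal subgroup. The only differences are cosmetic: you dispose of the zero-semigroup case at the level of individual variables (each occurring at least twice) rather than of the copies of $\mathbf v_{n,2^hm}^{(1)}$, and your explicit bookkeeping of the triples $(\ell^*,g,\ell^*)$ spells out the collapse into one subgroup that the paper states more tersely.
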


\begin{proof}
We have to verify that $\tau(\mathbf v_{n,2^km}^{(k+1)})\in E(S)$ for every substitution $\tau\colon X_{2n}^{(k+1)}\to S$. Recall from Remark \ref{rm:vnh} that the word $\mathbf v_{n,2^hm}^{(k+1)}$ is the image of the word $\mathbf v_{n,2^hm}^{(k)}$ under the substitution that sends every variable $x_{i_1\dots i_k}\in X^{(k)}_{2n}$ to the word $\mathbf v_{n,2^hm,i_1,\dots, i_k}^{(1)}$ obtained from $\mathbf v_{n,2^hm}^{(1)}$ by renaming its variables. Hence if all elements of the form $\tau(\mathbf v_{n,2^hm,i_1,\dots, i_k}^{(1)})$ lie in $\langle E(S)\rangle$, then $\tau(\mathbf v_{n,2^hm}^{(k+1)})\in E(S)$ by Lemma \ref{lem:E(S)}. So, we may assume that at least one element of the form $\tau(\mathbf v_{n,2^hm,i_1,\dots, i_k}^{(1)})$ lies in $S_1\setminus\langle E(S)\rangle$.

By the definition of an $(h,m,k)$-semigroup, $(S_1,\cdot)$ is either a zero semigroup or a Brandt semigroup over an $[m,k]$-group. If $(S_1,\cdot)$ is a zero semigroup, we have $\tau(\mathbf v_{n,2^km}^{(k+1)})=0$ since $S_1$ is an ideal of $(S,\cdot)$ and each factor of the form $\mathbf v_{n,2^hm,i_1,\dots, i_k}^{(1)}$ occurs in the word $\mathbf v_{n,2^km}^{(k+1)}$ at least twice. If $(S_1,\cdot)$ is a Brandt semigroup, Lemma~\ref{lem:fd} and its dual imply that either $\tau(\mathbf v_{n,2^hm}^{(k+1)})=0$ or removing all factors $\tau(\mathbf v_{n,2^hm,i_1,\dots, i_k}^{(1)})$ that lie in  $\langle E(S)\rangle$ does not change the value of $\tau(\mathbf v_{n,2^hm}^{(k+1)})$. In the former case, the claim holds, and in the latter case, the remaining elements belong to subgroups of $(S_1,\cdot)$ by Lemma \ref{lem:id-E(S)cupD1}. If some of these elements belong to different maximal subgroups of $(S_1,\cdot)$, then $\tau(\mathbf v_{n,2^hm}^{(k+1)})=0$ by the definition of multiplication in Brandt semigroups. Thus, it remains to consider only the situation when, for all $i_1,i_2,\dots,i_k\in\{1,2,\dots,2n\}$, all elements $\tau(\mathbf v_{n,2^hm,i_1,\dots, i_k}^{(1)})\in S_1\setminus\langle E(S)\rangle$ belong to the same maximal subgroup $(G,\cdot)$ of $(S_1,\cdot)$.

Denote by $e$ the identity element of $(G,\cdot)$ and define the substitution $\overline{\tau}\colon X_{2n}^{(k)}\to G$ by
\[
\overline{\tau}(x_{i_1\dots i_k}):=\begin{cases}
\tau(\mathbf v_{n,2^hm,i_1,\dots, i_k}^{(1)})&\text{if }\tau(\mathbf v_{n,2^hm,i_1,\dots, i_k}^{(1)})\text{ belongs to } G,\\
e&\text{otherwise},
\end{cases}
\]
for each $i_1,i_2,\dots, i_k\in\{1,2,\dots,2n\}$. As we know that $\tau(\mathbf v_{n,2^hm,i_1,\dots, i_k}^{(1)})\in \langle E(S)\rangle\cup G$ for all $i_1,i_2\dots, i_k\in\{1,2,\dots,2n\}$ and $fg=gf=g$ for all $g\in G$, $f\in E(S)$, we conclude that $\tau(\mathbf v_{n,2^hm}^{(k+1)})=\overline{\tau}(\mathbf v_{n,2^hm}^{(k)})$. Since $(G,\cdot)$ is an $[m,k]$-group (and so a $[2^hm,k]$-group), it satisfies the identity $\mathbf v_{n,2^hm}^{(k)} = 1$ by Lemma \ref{lem:id-G}. Hence $\tau(\mathbf v_{n,2^km}^{(k+1)})=\overline{\tau}(\mathbf v_{n,2^hm}^{(k)})=e$. Since the substitution $\tau$ is arbitrary, $(S,\cdot)$ satisfies the identity $\mathbf v_{n,2^hm}^{(k+1)} = (\mathbf v_{n,2^hm}^{(k+1)})^2$.
\end{proof}

\begin{proposition}
\label{prop:id-fh}
Let $(S,\cdot)$ be an $(h,m,k)$-semigroup with Brandt series \eqref{eq:series} and $S_0=\{0\}$. For any $n\ge 2$, $(S,\cdot)$ satisfies the identity
\begin{equation}
\label{eq:IDh}
\mathbf v_{n,2^hm}^{(kh+h)} = (\mathbf v_{n,2^hm}^{(kh+h)})^2.
\end{equation}
\end{proposition}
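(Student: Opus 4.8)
The plan is to induct on the height $h$ of the Brandt series \eqref{eq:series}, with $n\ge2$ and $k$ fixed. To make the subscripts line up in the inductive step I would actually prove the slightly more flexible statement quantified over the exponent parameter: \emph{for every $\mu\ge1$, every $(h,\mu,k)$-semigroup with trivial least ideal satisfies $\mathbf v_{n,2^h\mu}^{(kh+h)}=(\mathbf v_{n,2^h\mu}^{(kh+h)})^2$}. Note that $kh+h=(k+1)h$ and that this identity just says $\tau(\mathbf v_{n,2^h\mu}^{(kh+h)})\in E(S)$ for every substitution $\tau$. The base case $h=1$ is immediate: then $S_1=S=\langle E(S)\rangle\cup S_1$, so Lemma~\ref{lem:id-E(S)cupD2} applies directly (for any $\mu$) and gives the desired identity with $(k+1)\cdot1=k+1$.

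For the inductive step, write $M:=2^hm$ and fix $\tau\colon X_{2n}^{((k+1)h)}\to S$. Using Remark~\ref{rm:vnh} with the split $(k+1)h=(k+1)+(k+1)(h-1)$, I would view $\mathbf v_{n,M}^{((k+1)h)}$ as the image of the \emph{outer} word $\mathbf v_{n,M}^{(k+1)}$ under the substitution that sends each variable $x_{\mathbf i}$, $\mathbf i\in\{1,\dots,2n\}^{k+1}$, to a renamed copy $W_{\mathbf i}$ of the \emph{inner} word $\mathbf v_{n,M}^{((k+1)(h-1))}$. Setting $c_{\mathbf i}:=\tau(W_{\mathbf i})$ and $\overline\tau(x_{\mathbf i}):=c_{\mathbf i}$, we get $\tau(\mathbf v_{n,M}^{((k+1)h)})=\overline\tau(\mathbf v_{n,M}^{(k+1)})$, so it suffices to prove $\overline\tau(\mathbf v_{n,M}^{(k+1)})\in E(S)$.

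The first key step is to locate the inner values. Let $\pi\colon S\to S/S_1$ be the Rees homomorphism. The quotient carries the induced Brandt series of height $h-1$ with least ideal $\{0\}$, and all its subgroups are $[m,k]$-groups, hence also $[2m,k]$-groups; thus $S/S_1$ is an $(h-1,2m,k)$-semigroup. Applying the induction hypothesis with $\mu=2m$ — chosen precisely so that $2^{h-1}\mu=2^hm=M$ — shows that every value of $\mathbf v_{n,M}^{((k+1)(h-1))}$ in $S/S_1$ is idempotent; in particular each $\pi(c_{\mathbf i})=(\pi\tau)(W_{\mathbf i})$ is idempotent. In a Rees quotient a nonzero idempotent is the image of a (unique) idempotent of $S$, so either $\pi(c_{\mathbf i})=0$ and $c_{\mathbf i}\in S_1$, or $c_{\mathbf i}\in E(S)$; in both cases $c_{\mathbf i}\in\langle E(S)\rangle\cup S_1$.

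The second key step is the outer combination. The substitution $\overline\tau\colon X_{2n}^{(k+1)}\to S$ now takes all its values in $\langle E(S)\rangle\cup S_1$, and $M=2^hm$ is exactly the subscript attached to the height-$h$ semigroup $S$ in Lemma~\ref{lem:id-E(S)cupD2}. That lemma's proof uses its hypothesis $S=\langle E(S)\rangle\cup S_1$ only to ensure that the substituted values lie in $\langle E(S)\rangle\cup S_1$; since this holds for $\overline\tau$, the same argument (invoking Lemmas~\ref{lem:E(S)}, \ref{lem:fd}, \ref{lem:id-E(S)cupD1} and \ref{lem:id-G}) applies verbatim and yields $\overline\tau(\mathbf v_{n,M}^{(k+1)})\in E(S)$, which is \eqref{eq:IDh}. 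I expect the main obstacle to be exactly this subscript bookkeeping: the bottom Brandt level is treated at $S$'s native parameters, but the top $h-1$ levels live in $S/S_1$, whose intrinsic period is only $2^{h-1}m$, so the hypothesis must be invoked with the inflated parameter $2m$ to keep every subscript equal to $M$. A secondary delicate point is the reuse of Lemma~\ref{lem:id-E(S)cupD2} for $\overline\tau$ despite $S\ne\langle E(S)\rangle\cup S_1$; to avoid appealing to a proof rather than a statement, one may first record the obvious local refinement that \emph{any} substitution $X_{2n}^{(k+1)}\to\langle E(S)\rangle\cup S_1$ sends $\mathbf v_{n,2^hm}^{(k+1)}$ into $E(S)$, and then quote it here.
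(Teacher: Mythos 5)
Your proof is correct and is essentially the paper's own argument: induction on $h$ with base case Lemma~\ref{lem:id-E(S)cupD2}, passage to the Rees quotient $(S/S_1,\cdot)$, the decomposition $(k+1)h=(k+1)+(k+1)(h-1)$ from Remark~\ref{rm:vnh}, the induced substitution $\overline\tau$ with values in $E(S)\cup S_1$, and a final appeal to Lemma~\ref{lem:id-E(S)cupD2}. The only (harmless) deviations are bookkeeping: the paper keeps $m$ fixed and lifts the inductive identity from subscript $2^{h-1}m$ to $2^hm$ via Lemma~\ref{lem:periodic} instead of your induction parametrized over $\mu$, and it settles your ``delicate point'' by applying Lemma~\ref{lem:id-E(S)cupD2} not to $S$ but to the subsemigroup $(\langle E(S)\rangle\cup S_1,\cdot)$, which does satisfy that lemma's hypothesis --- exactly the clean form of the ``local refinement'' you propose to record.
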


\begin{proof}
We induct on $h$. If $h=1$, then the claim follows from Lemma~\ref{lem:id-E(S)cupD2}. Let $h>1$. The Rees quotient $(S/S_1,\cdot)$ is an $(h-1,m,k)$-semigroup whose Brandt series starts with the zero term. By the induction assumption, $(S/S_1,\cdot)$ satisfies the identity $\mathbf v_{n,2^{h-1}m}^{(kh-k+h-1)} = (\mathbf v_{n,2^{h-1}m}^{(kh-k+h-1)})^2$ for any $n\ge2$. By Lemma \ref{lem:periodic} $(S/S_1,\cdot)$ satisfies the identity $\mathbf v_{n,2^hm}^{(kh-k+h-1)} = (\mathbf v_{n,2^hm}^{(kh-k+h-1)})^2$ as well. This readily implies that any substitution $X_{2n}^{(kh-k+h-1)}\to S$ sends the word $\mathbf v_{n,2^hm}^{(kh-k+h-1)}$ to either an idempotent in $S\setminus S_1$ or an element in $S_1$. Recall from Remark \ref{rm:vnh} that the word $\mathbf v_{n,2^hm}^{(kh+h)}$ is the image of the word $\mathbf v_{n,2^hm}^{(k+1)}$ under the substitution that sends every variable $x_{i_1\dots i_{k+1}}\in X^{(k+1)}_{2n}$ to the word $\mathbf v_{n,2^hm,i_1,\dots,i_{k+1}}^{(kh-k+h-1)}$ obtained from $\mathbf v_{n,2^hm}^{(kh-k+h-1)}$ by renaming its variables. Therefore, for every substitution $\tau\colon X_{2n}^{(kh+h)}\to S$, all elements of the form $\tau(\mathbf v_{n,2^hm,i_1,\dots,i_{k+1}}^{(kh-k+h-1)})$ lie in either $E(S)$ or $S_1$.

Now consider the substitution $\overline{\tau}\colon X_{2n}^{(k+1)}\to E(S) \cup S_1$ defined by
\[
\overline{\tau}(x_{i_1\dots i_{k+1}}):=\tau(\mathbf v_{n,2^hm,i_1,\dots,i_{k+1}}^{(kh-k+h-1)}),\quad \text{ for each $i_1,i_2,\dots, i_{k+1}\in\{1,2,\dots,2n\}$}.
\]
Clearly, $\tau(\mathbf v_{n,2^hm}^{(kh+h)})=\overline{\tau}(\mathbf v_{n,2^hm}^{(k+1)})$. However, $(\langle E(S) \rangle\cup S_1,\cdot)$ is an $(h,m,k)$-semigroup satisfying the assumption of Lemma \ref{lem:id-E(S)cupD2}. By this lemma, the semigroup $(\langle E(S) \rangle\cup S_1,\cdot)$ satisfies the identity $\mathbf v_{n,2^hm}^{(k+1)}= (\mathbf v_{n,2^hm}^{(k+1)})^2$. This ensures that the element $\tau(\mathbf v_{n,2^hm}^{(kh+h)})=\overline{\tau}(\mathbf v_{n,2^hm}^{(k+1)})$ is an idempotent. Since the substitution $\tau$ is arbitrary, $(S,\cdot)$ satisfies the identity \eqref{eq:IDh}.
\end{proof}

Finally, we remove the restriction $S_0=\{0\}$.

\begin{proposition}
\label{prop:id-fkh+k}
The identity $\mathbf v_{n,2^hm}^{(kh+h+k)} = (\mathbf v_{n,2^hm}^{(kh+h+k)})^2$ with $n\ge 2$ holds in each $(h,m,k)$-semigroup.
\end{proposition}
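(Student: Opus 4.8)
The plan is to reduce the general case to the one already settled in Proposition~\ref{prop:id-fh} by factoring out the least ideal. Let $(S,\cdot)$ be an $(h,m,k)$-semigroup with Brandt series \eqref{eq:series}. By Corollary~\ref{cor:series} the least ideal $S_0$ is a group, and since all subgroups of $S$ are $[m,k]$-groups, so is $(S_0,\cdot)$; denote by $e_0$ its identity element. First I would pass to the Rees quotient $(S/S_0,\cdot)$: collapsing $S_0$ to a single zero turns \eqref{eq:series} into a Brandt series of the same height $h$ whose least term is $\{0\}$ and whose remaining factors are unchanged, so $S/S_0$ is an $(h,m,k)$-semigroup with least ideal $\{0\}$. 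Proposition~\ref{prop:id-fh} then applies to $S/S_0$ and shows that $\mathbf v_{n,2^hm}^{(kh+h)}$ is sent to an idempotent of $S/S_0$ under every substitution. Reading this back in $S$, every substitution sends $\mathbf v_{n,2^hm}^{(kh+h)}$ to an element $w$ whose image $\bar w$ in $S/S_0$ satisfies $\bar w=\bar w^2$; hence either $w\in S_0$ or $w=w^2$ in $S$, that is, $w\in E(S)$.

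Next I would invoke Remark~\ref{rm:vnh}, with the roles $h\mapsto k$ and $r\mapsto kh+h$, to write $\mathbf v_{n,2^hm}^{(kh+h+k)}$ as the image of the outer word $\mathbf v_{n,2^hm}^{(k)}$ under the substitution replacing each variable by a renamed copy of $\mathbf v_{n,2^hm}^{(kh+h)}$. Thus for an arbitrary $\tau\colon X_{2n}^{(kh+h+k)}\to S$ we obtain $\tau(\mathbf v_{n,2^hm}^{(kh+h+k)})=\bar\tau(\mathbf v_{n,2^hm}^{(k)})$, where, by the previous paragraph, each value $\bar\tau(x_{i_1\dots i_k})$ is either an idempotent of $S$ or an element of $S_0$. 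If all these values lie in $\langle E(S)\rangle$, then Lemma~\ref{lem:E(S)} gives at once $\bar\tau(\mathbf v_{n,2^hm}^{(k)})\in E(S)$, and the conclusion holds.

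The remaining case, in which at least one value lies in $S_0$, is where the main work sits. The key fact I would establish is that every idempotent acts as the identity of the kernel group: for $f\in E(S)$ the element $fe_0$ lies in $\langle E(S)\rangle$, being a product of idempotents, and in the group $S_0$, which is an ideal, so $fe_0$ is regular in $S$; Lemma~\ref{lem:ef-idempotent} then forces $fe_0\in E(S)$, and since $e_0$ is the only idempotent of the group $S_0$ we conclude $fe_0=e_0$, and dually $e_0f=e_0$. Consequently $fb=bf=b$ for all $f\in E(S)$ and $b\in S_0$. As $S_0$ is an ideal, the product $\bar\tau(\mathbf v_{n,2^hm}^{(k)})$ lies in $S_0$, and these absorption identities let me suppress every idempotent factor without changing its value: replacing each idempotent value by $e_0$ yields a substitution $\hat\tau\colon X_{2n}^{(k)}\to S_0$ with $\bar\tau(\mathbf v_{n,2^hm}^{(k)})=\hat\tau(\mathbf v_{n,2^hm}^{(k)})$, both products being equal to the product of the $S_0$-valued factors taken in order. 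Since $(S_0,\cdot)$ is an $[m,k]$-group, hence a $[2^hm,k]$-group, Lemma~\ref{lem:id-G} gives $\hat\tau(\mathbf v_{n,2^hm}^{(k)})=e_0\in E(S)$. In either case $\tau(\mathbf v_{n,2^hm}^{(kh+h+k)})$ is idempotent, which is exactly the asserted identity.

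The main obstacle is precisely the absorption fact $fe_0=e_0f=e_0$; everything else is bookkeeping with Remark~\ref{rm:vnh} and the two already-proved evaluation lemmas. I expect the only subtle point there is to check that $fe_0$ is regular in $S$ in the sense required by Lemma~\ref{lem:ef-idempotent}, namely that its group inverse inside $S_0$ is a genuine semigroup inverse; this is immediate because $e_0$ is a two-sided identity on $S_0$, so $fe_0$ together with its $S_0$-inverse satisfies both inverse equations.
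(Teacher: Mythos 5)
Your proof is correct and follows essentially the same route as the paper's: pass to the Rees quotient $(S/S_0,\cdot)$, apply Proposition~\ref{prop:id-fh} there, decompose $\mathbf v_{n,2^hm}^{(kh+h+k)}$ via Remark~\ref{rm:vnh}, establish the absorption property $fe_0=e_0f=e_0$ for $f\in E(S)$ through Lemma~\ref{lem:ef-idempotent}, and finish with Lemma~\ref{lem:id-G} on the kernel group $(S_0,\cdot)$. If anything, your explicit treatment of the case where all inner values lie in $\langle E(S)\rangle$ (handled via Lemma~\ref{lem:E(S)}) is slightly more careful than the paper's write-up, which folds that case into the absorption argument.
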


\begin{proof}
Let $(S,\cdot)$ be an $(h,m,k)$-semigroup with Brandt series \eqref{eq:series}. Consider the Rees quotient $(S/S_0,\cdot)$. This is an $(h,m,k)$-semigroup whose Brandt series starts with the zero term. By Proposition \ref{prop:id-fh}, the semigroup $(S/S_0,\cdot)$ satisfies \eqref{eq:IDh}. Recall from Remark \ref{rm:vnh} that the word $\mathbf v_{n,2^hm}^{(kh+h+k)}$ is the image of the word $\mathbf v_{n,2^hm}^{(k)}$ under the substitution  that sends every variable $x_{i_1\dots i_k}\in X^{(k)}_{2n}$ to the word $\mathbf v_{n,2^hm,i_1,i_2,\dots,i_k}^{(hk+h)}$ obtained from $\mathbf v_{n,2^hm}^{(hk+h)}$ by renaming its variables. Then, for every substitution $\tau\colon X_{2n}^{(kh+h+k)}\to S$, the element $\tau(\mathbf v_{n,2^hm,i_1,\dots,i_k}^{(kh+h)})$ is an idempotent of $(S/S_0,\cdot)$. Therefore, $\tau(\mathbf v_{n,2^hm,i_1,\dots,i_k}^{(kh+h)})$ lies in either $E(S)\setminus S_0$ or $S_0$. Recall that by the definition of an $(h,m,k)$-semigroup, $(S_0,\cdot)$ is an $[m,k]$-group. Denote by $e$ the identity element of this group. For each $f\in E(S)$, the product $fe$ lies in $S_0$ because $S_0$ is an ideal in $S$. Then $fe$ is an idempotent in $S_0$ by Lemma \ref{lem:ef-idempotent} which applies since  $(S,\cdot)$ is a periodic block-group by Lemmas~\ref{lem:periodic} and~\ref{lem:bg_series}. Hence $fe=e$ since a group has no idempotents except its identity element. Consequently, for every $g\in S_0$, we have $fg=feg=eg=g$. Dually, $gf=g$ for all $g\in S_0$, $f\in E(S)$. Now consider the substitution $\overline{\tau}\colon X_{2n}^{(k)}\to S_0$ defined by
\[
\overline{\tau}(x_{i_1\dots i_k}):=\begin{cases}
\tau(\mathbf v_{n,2^hm,i_1,\dots, i_k}^{(kh+h)})&\text{if }\tau(\mathbf v_{n,m,i_1,\dots, i_k}^{(kh+h)})\text{ belongs to } S_0,\\
e&\text{otherwise},
\end{cases}
\]
for each $i_1,i_2,\dots, i_k\in\{1,2,\dots,2n\}$.
As we know that $\tau(\mathbf v_{n,2^hm,i_1,\dots, i_k}^{(kh+h)})\in E(S)\cup S_0$ for all $i_1,i_2\dots, i_k\in\{1,2,\dots,2n\}$ and $fg=gf=g$ for all $g\in S_0$, $f\in E(S)$, we conclude that $\tau(\mathbf v_{n,2^hm}^{(kh+h+k)})=\overline{\tau}(\mathbf v_{n,2^hm}^{(k)})$. Since $(S_0,\cdot)$ is an $[m,k]$-group (and so a $[2^hm,k]$-group), it satisfies the identity $\mathbf v_{n,2^hm}^{(k)} = 1$ by Lemma \ref{lem:id-G}. Hence $\tau(\mathbf v_{n,2^hm}^{(kh+h+k)})=\overline{\tau}(\mathbf v_{n,2^hm}^{(k)})=e$. Since the substitution $\tau$ is arbitrary, $(S,\cdot)$ satisfies the identity $\mathbf v_{n,2^hm}^{(kh+h+k)} = (\mathbf v_{n,2^hm}^{(kh+h+k)})^2$.
\end{proof}

Now we are ready to complete the mission of this section.

\begin{proposition}
\label{prop:finbg}
For each finite block-group $(S,\cdot)$ with solvable subgroups, there exist positive integers $q$ and $r$ such that $(S,\cdot)$ satisfies the identities $\mathbf v_{n,q}^{(r)} = (\mathbf v_{n,q}^{(r)})^2$ for all $n\ge 2$.
\end{proposition}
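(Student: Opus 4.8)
The plan is to realize an arbitrary finite block-group $(S,\cdot)$ with solvable subgroups as an $(h,m,k)$-semigroup for a suitable triple of parameters, and then to read off the desired identities directly from Proposition~\ref{prop:id-fkh+k}; all the substantive work has already been done in the preceding results, so this statement is essentially a packaging step.

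First I would assemble the structural ingredients. Being finite, $(S,\cdot)$ is periodic and possesses a principal series~\eqref{eq:series}; let $h$ denote its height. By Corollary~\ref{cor:series}, this principal series is in fact a Brandt series: its least term $S_0$ is a group, and every non-zero factor $(S_j/S_{j-1},\cdot)$ is a Brandt semigroup. Thus $(S,\cdot)$ already has the shape demanded by the definition of an $(h,m,k)$-semigroup, and it remains only to fix the group-theoretic parameters $m$ and $k$.

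Next I would choose these parameters uniformly, which is possible precisely because $S$ is finite. A finite semigroup has only finitely many subgroups, each contained in a maximal subgroup, and by hypothesis all of them are solvable. Hence I can take $m$ to be a common exponent of the (finitely many) subgroups of $(S,\cdot)$---for instance the least common multiple of their exponents---and $k$ to be the maximum of their derived lengths, which is finite since each subgroup is solvable. With these choices every subgroup of $(S,\cdot)$ is a solvable $m$-group of derived length at most $k$, i.e.\ an $[m,k]$-group, so $(S,\cdot)$ is an $(h,m,k)$-semigroup.

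Finally I would invoke Proposition~\ref{prop:id-fkh+k}: for every $n\ge 2$ the $(h,m,k)$-semigroup $(S,\cdot)$ satisfies $\mathbf v_{n,2^hm}^{(kh+h+k)} = (\mathbf v_{n,2^hm}^{(kh+h+k)})^2$, so setting $q:=2^hm$ and $r:=kh+h+k$ yields the claim. I do not anticipate any genuine obstacle here, since the combinatorial and inductive difficulties were all resolved earlier; the only point deserving a line of care is the degenerate case $h=0$, in which $(S,\cdot)$ is itself an $[m,k]$-group. There one does not even need the general machinery: Lemma~\ref{lem:id-G} gives $\mathbf v_{n,m}^{(k)}=1$, hence $\mathbf v_{n,m}^{(k)} = (\mathbf v_{n,m}^{(k)})^2$, which matches the above formulas with $q=m$ and $r=k$.
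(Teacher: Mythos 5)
Your proof is correct and follows essentially the same route as the paper's: take a principal series of height $h$, use Corollary~\ref{cor:series} to see it is a Brandt series, let $m$ be the least common multiple of the exponents and $k$ the maximum derived length of the (solvable) subgroups, and apply Proposition~\ref{prop:id-fkh+k} with $q=2^hm$ and $r=kh+h+k$. Your extra remark on the degenerate case $h=0$, where $(S,\cdot)$ is itself an $[m,k]$-group and Lemma~\ref{lem:id-G} already gives the identities, is a point of care the paper's proof passes over silently; it is a sensible addition, since the words $\mathbf v_{n,q}^{(0)}$ are undefined and the induction behind Proposition~\ref{prop:id-fh} starts at $h=1$.
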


\begin{proof}
Clearly, each finite semigroup possesses a principal series. By Corollary~\ref{cor:series}, every principal series of a block-group is a Brandt series. Let $h$ be the height of a Brandt series of $(S,\cdot)$, let $m$ be the least common multiple of the exponents of subgroups of $(S,\cdot)$, and let $k$ be the maximum of the derived lengths of these subgroups. Then $(S,\cdot)$ is an $(h,m,k)$-semi\-group. Now Proposition~\ref{prop:id-fkh+k} applies, showing that $(S,\cdot)$ satisfies all identities $\mathbf v_{n,2^hm}^{(kh+h+k)} = (\mathbf v_{n,2^hm}^{(kh+h+k)})^2$ with $n\ge 2$. Hence one can choose $q=2^hm$ and $r=kh+k+h$.
\end{proof}

Proposition~\ref{prop:finbg} gives a vast generalization of \cite[Proposition 2.4]{GV22} where a similar result was established for finite inverse semigroups with abelian subgroups.

\section{Proofs of Theorems~\ref{thm:psg} and~\ref{thm:hall}}
\label{sec:proofs}

Recall that the 6-\emph{element Brandt monoid} $(B_2^1,\cdot)$ is formed by the following zero-one $2\times 2$-matrices
\begin{equation}\label{eq:brandt}
\begin{tabular}{cccccc}
$\left(\begin{matrix} 0&0\\0&0\end{matrix}\right)$
&
$\left(\begin{matrix} 1&0\\0&1\end{matrix}\right)$
&
$\left(\begin{matrix} 0&1\\0&0\end{matrix}\right)$
&
$\left(\begin{matrix} 0&0\\1&0\end{matrix}\right)$
&
$\left(\begin{matrix} 1&0\\0&0\end{matrix}\right)$
&
$\left(\begin{matrix} 0&0\\0&1\end{matrix}\right)$
\end{tabular}
\end{equation}
under the usual matrix multiplication $\cdot$. In terms of the construction for Brandt semigroups presented in Sect.~\ref{sec:prelim}, $(B_2^1,\cdot)$  is obtained by adjoining the identity element to the Brandt semigroup over the trivial group with the 2-element index set. It is known and easy to verify that the set $B_2^1$ admits a unique addition $+$ such that $\mathcal{B}_2^1:=(B_2^1,+,\cdot)$ becomes an \ais. The addition is nothing but the Hadamard (entry-wise) product of matrices:
\[
(a_{ij})+(b_{ij}):=(a_{ij}b_{ij}).
\]
The identities of the semiring $\mathcal{B}_2^1$ were studied by the second author \cite{Vol21} and, independently, by Jackson, Ren, and Zhao~\cite{JackRenZhao}; it was proved that these identities admit no finite basis. Moreover, in~\cite{GV22} we proved that this property of $\mathcal{B}_2^1$ is very contagious: if all identities of an \ais\ hold in $\mathcal{B}_2^1$, then under some rather mild conditions, the identities have no finite basis. Here is the key result of~\cite{GV22} stated in the notation adopted here.

\begin{theorem}[{\!\cite[Theorem 4.2]{GV22}}]
\label{thm:old}
Let $\mathcal{S}$ be an \ais\ whose multiplicative reduct satisfies the identities $\mathbf v_{n,q}^{(r)} = (\mathbf v_{n,q}^{(r)})^2$ for all $n\ge 2$ and some $q,r\ge 1$. If all identities of $\mathcal{S}$ hold in the \ais\ $\mathcal{B}_2^1$, then the identities admit no finite basis.
\end{theorem}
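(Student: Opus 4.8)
The plan is to argue by contradiction, using the standard variable-counting scheme for non-finite-basis results to transfer the known failure of the finite basis property for $\mathcal{B}_2^1$ onto $\mathcal{S}$. Write $\mathrm{Id}(\mathcal{A})$ for the set of semiring identities holding in an \ais\ $\mathcal{A}$, and $\mathrm{Id}_{\le n}(\mathcal{A})$ for those among them involving at most $n$ variables. Suppose $\mathrm{Id}(\mathcal{S})$ had a finite basis $\Sigma$, and let $N$ be the largest number of variables occurring in an identity of $\Sigma$. I would produce, for every $n>N$, a \emph{critical} semiring identity $\alpha_n=\beta_n$ together with a finite \ais\ $\mathcal{A}_n$ satisfying: (i) $\alpha_n=\beta_n$ holds in $\mathcal{S}$; (ii) $\mathcal{A}_n\models\mathrm{Id}_{\le n}(\mathcal{S})$; and (iii) $\mathcal{A}_n\not\models\alpha_n=\beta_n$. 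These three facts are contradictory: each identity of $\Sigma$ lies in $\mathrm{Id}_{\le N}(\mathcal{S})\subseteq\mathrm{Id}_{\le n}(\mathcal{S})$, so (ii) gives $\mathcal{A}_n\models\Sigma$; since $\Sigma$ is a basis, $\mathcal{A}_n$ then satisfies every consequence of $\Sigma$, i.e. all of $\mathrm{Id}(\mathcal{S})$, and in particular $\alpha_n=\beta_n$ by (i), contradicting (iii).

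The separating data for (ii) and (iii) I would import directly from the proof that $\mathcal{B}_2^1$ is non-finitely based \cite{Vol21,JackRenZhao}. That argument supplies, for each $n$, a finite \ais\ $\mathcal{A}_n$ with $\mathcal{A}_n\models\mathrm{Id}_{\le n}(\mathcal{B}_2^1)$ yet $\mathcal{A}_n\not\models\alpha_n=\beta_n$ for a suitable critical identity $\alpha_n=\beta_n\in\mathrm{Id}(\mathcal{B}_2^1)$; this is exactly (iii). For (ii), the hypothesis that all identities of $\mathcal{S}$ hold in $\mathcal{B}_2^1$ reads $\mathrm{Id}(\mathcal{S})\subseteq\mathrm{Id}(\mathcal{B}_2^1)$, whence $\mathrm{Id}_{\le n}(\mathcal{S})\subseteq\mathrm{Id}_{\le n}(\mathcal{B}_2^1)$; combined with $\mathcal{A}_n\models\mathrm{Id}_{\le n}(\mathcal{B}_2^1)$ this yields $\mathcal{A}_n\models\mathrm{Id}_{\le n}(\mathcal{S})$, as required.

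The real work, and the step I expect to be the main obstacle, is condition (i): that $\mathcal{S}$ itself satisfies the critical identities $\alpha_n=\beta_n$. This cannot be read off from the mere fact that $\mathcal{B}_2^1$ lies in the variety generated by $\mathcal{S}$, which only supplies the inclusion $\mathrm{Id}(\mathcal{S})\subseteq\mathrm{Id}(\mathcal{B}_2^1)$ in the opposite direction; it must be deduced directly from the \ais\ axioms together with the multiplicative hypothesis. The point of that hypothesis is exactly this: the critical words $\alpha_n,\beta_n$ are assembled with the words $\mathbf v_{n,q}^{(r)}$ as scaffolding, and I would verify that once these factors are multiplicatively idempotent --- which is guaranteed in $\mathcal{S}$ by $\mathbf v_{n,q}^{(r)}=(\mathbf v_{n,q}^{(r)})^2$ --- the two sides of $\alpha_n=\beta_n$ reduce, after distributing products over sums and absorbing repeated summands via $x+x=x$, to one and the same element of $\mathcal{S}$. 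In this way the single family of multiplicative identities furnished by the hypothesis forces $\mathcal{S}$ to obey precisely the sequence of semiring identities that the $\mathcal{A}_n$ render underivable from any finite set of identities in boundedly many variables, completing the contradiction.
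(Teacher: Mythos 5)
Your outer scheme --- conditions (i)--(iii) plus the variable-counting contradiction --- is sound; indeed, the paper's own argument (the proof of \cite[Theorem~4.2]{GV22}, reproduced in involution form as the proof of Theorem~\ref{thm:inv} in this paper) is an instance of exactly this scheme. The fatal gap is condition (i), the step you yourself flag as the main obstacle. Your plan for it rests on a false premise: the critical identities $\alpha_n=\beta_n$ of \cite{Vol21,JackRenZhao} are not ``assembled with the words $\mathbf v_{n,q}^{(r)}$ as scaffolding.'' They cannot be, since $q$ and $r$ are parameters of the particular $\mathcal{S}$ at hand, whereas those papers construct fixed graph-based identities whose validity in $\mathcal{B}_2^1$ is extracted from the multiplication table of $B_2^1$; the words $\mathbf v_{n,q}^{(r)}$ belong to the machinery of \cite{GV22} and are tailored to Ka\softd{}ourek's inverse semigroups \cite{Kad03}, not to the constructions of \cite{Vol21,JackRenZhao}. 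Moreover, one can pin down exactly which identities are at your disposal and see that they cannot do the job. Let $\Pi$ denote the set of semigroup identities $\mathbf v_{n,q}^{(r)}=(\mathbf v_{n,q}^{(r)})^2$, $n\ge2$. Since $\Pi\subseteq\mathrm{Id}(\mathcal{S})\subseteq\mathrm{Id}(\mathcal{B}_2^1)$, for \emph{any} \ais\ $\mathcal{T}$ satisfying $\Pi$ the product $\mathcal{T}\times\mathcal{B}_2^1$ again satisfies both hypotheses of the theorem (with the same $q,r$); and since identities of a product hold in each factor, any identity valid in every $\mathcal{S}$ within the theorem's scope must hold in every such $\mathcal{T}$, i.e., must be a consequence of the \ais\ axioms together with $\Pi$. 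In particular, your critical identities would have to hold in every \ais\ in which all products of three or more elements equal a multiplicative zero, because both sides of every identity in $\Pi$ are products of length at least three, so such an \ais\ satisfies all of $\Pi$. The identities of \cite{Vol21,JackRenZhao} admit no such derivation: the separating semirings $\mathcal{A}_n$ in those proofs are flat, graph-encoded algebras of precisely this kind, so they satisfy the \ais\ axioms and all of $\Pi$ while violating $\alpha_n=\beta_n$. Thus condition (i) is not merely unproved --- for your choice of critical identities it fails for suitable $\mathcal{S}$ inside the theorem's scope (e.g.\ $\mathcal{B}_2^1\times\mathcal{A}_n$), so no amount of distributing products over sums and absorbing summands can rescue it.

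The paper distributes the work in the opposite way. It takes the critical identity to be a member of $\Pi$ itself, $\mathbf v_{k,q}^{(r)}=(\mathbf v_{k,q}^{(r)})^2$, so that your condition (i) holds by hypothesis, for free; the effort then goes into witnesses adapted to this identity, namely algebras built on Ka\softd{}ourek's inverse semigroups $S_k^{(r)}$. Every subalgebra of $S_k^{(r)}$ generated by fewer than $k$ elements satisfies all identities of $\mathcal{B}_2^1$, which yields your condition (ii) through the same inclusion $\mathrm{Id}_{\le n}(\mathcal{S})\subseteq\mathrm{Id}_{\le n}(\mathcal{B}_2^1)$ you invoke, while $(S_k^{(r)},\cdot)$ violates $\mathbf v_{k,q}^{(r)}=(\mathbf v_{k,q}^{(r)})^2$, which yields (iii). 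The moral is that under the theorem's hypotheses the critical identities must be drawn from $\Pi$ and its consequences, and the genuinely hard part is constructing algebras that separate $\Pi$ from the few-variable identities of $\mathcal{B}_2^1$ --- not importing the separating algebras of \cite{Vol21,JackRenZhao} and forcing their critical identities onto $\mathcal{S}$.
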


In~\cite{GV22}, Theorem~\ref{thm:old} was applied to finite \ais{}s whose multiplicative reducts were inverse semigroups with abelian subgroups. Due to Proposition~\ref{prop:finbg}, we are in a position to enlarge its application range as follows.

\begin{theorem}
\label{thm:main}
Let $\mathcal{S}=(S,+,\cdot)$ be a finite \ais\ whose multiplicative reduct is a block-group with solvable subgroups. If all identities of $\mathcal{S}$ hold in the \ais\ $\mathcal{B}_2^1$, then the identities admit no finite basis.
\end{theorem}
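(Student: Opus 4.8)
The plan is to observe that Theorem~\ref{thm:main} follows at once by chaining together the two results already in hand: Proposition~\ref{prop:finbg}, which supplies the required word identities for the multiplicative reduct, and Theorem~\ref{thm:old}, which converts those identities (together with the hypothesis that all identities of $\mathcal{S}$ hold in $\mathcal{B}_2^1$) into the absence of a finite identity basis. Essentially all the substantive content has been discharged in Section~\ref{sec:identities} and in \cite{GV22}; the task here is merely to verify that the hypotheses align.

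Concretely, I would first let $\mathcal{S}=(S,+,\cdot)$ be as in the statement, so that its multiplicative reduct $(S,\cdot)$ is a finite block-group with solvable subgroups. Applying Proposition~\ref{prop:finbg} to $(S,\cdot)$ produces positive integers $q$ and $r$ for which $(S,\cdot)$ satisfies $\mathbf v_{n,q}^{(r)} = (\mathbf v_{n,q}^{(r)})^2$ for every $n\ge 2$. These are exactly the semigroup identities demanded by the first hypothesis of Theorem~\ref{thm:old}.

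Second, I would note that the remaining hypothesis of the present theorem---that every identity of $\mathcal{S}$ holds in the \ais\ $\mathcal{B}_2^1$---is verbatim the second hypothesis of Theorem~\ref{thm:old}. With both hypotheses of Theorem~\ref{thm:old} now confirmed for $\mathcal{S}$ using the parameters $q,r$ just obtained, Theorem~\ref{thm:old} applies directly and yields that the identities of $\mathcal{S}$ admit no finite basis, which is precisely the conclusion sought.

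I expect no genuine obstacle in this final argument: Theorem~\ref{thm:main} is a clean corollary got by feeding the output of Proposition~\ref{prop:finbg} into Theorem~\ref{thm:old}. The real difficulty was front-loaded into Section~\ref{sec:identities}---the inductive tower culminating in Proposition~\ref{prop:id-fkh+k}, which established that finite block-groups with solvable subgroups satisfy the whole family $\mathbf v_{n,q}^{(r)} = (\mathbf v_{n,q}^{(r)})^2$. Once that machinery is available, the proof of Theorem~\ref{thm:main} reduces to bookkeeping.
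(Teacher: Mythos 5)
Your proposal is correct and is essentially identical to the paper's own proof: the paper likewise obtains the identities $\mathbf v_{n,q}^{(r)} = (\mathbf v_{n,q}^{(r)})^2$ from Proposition~\ref{prop:finbg} and then invokes Theorem~\ref{thm:old} directly. Nothing further is needed.
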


\begin{proof}
Proposition \ref{prop:finbg} implies that the semigroup $(S,\cdot)$ satisfies the identities $\mathbf v_{n,q}^{(r)} = (\mathbf v_{n,q}^{(r)})^2$ for all $n\ge 2$ and some $q,r\ge 1$, and therefore, Theorem~\ref{thm:old} applies.
\end{proof}

We are ready to deduce Theorems~\ref{thm:psg} and~\ref{thm:hall}.

\begin{proof}[Proof of Theorem~\ref{thm:psg}]
We aim to prove that for any finite non-abelian solvable group $\mathcal{G}=(G,\cdot)$, the identities of the power group $(\mathcal{P}(G),\cup,\cdot)$ admit no finite basis. It is shown in \cite[Proposition 2.4]{Pin:1980}, see also \cite[Proposition 11.1.2]{Almeida:95}, that the semigroup $(\mathcal{P}(G),\cdot)$ is a block-group and all maximal subgroups of $(\mathcal{P}(G),\cdot)$ are of the form $(N_G(H)/H,\cdot)$ where $(H,\cdot)$ is a subgroup of $\mathcal{G}$ and $N_G(H)=\{g\in G\mid gH=Hg\}$ is the normalizer of $(H,\cdot)$ in $\mathcal{G}$. Hence each subgroup of $(\mathcal{P}(G),\cdot)$ embeds into a quotient of a subgroup of $\mathcal{G}$ and so it is solvable. We see that the multiplicative reduct of $(\mathcal{P}(G),\cup,\cdot)$ is a block-group with solvable subgroups.

First suppose that $\mathcal{G}$ is not a Dedekind group, that is, some subgroup $(H,\cdot)$ is not normal in $\mathcal{G}$. Then, of course, $1<|H|<|G|$ and one can find an element $g\in G$ with $g^{-1}Hg\ne H$. Denote by $E$ the singleton subgroup of $\mathcal{G}$, let $J:=\{A\subseteq G\mid |A|>|H|\}$, and consider the following subset of $\mathcal{P}(G)$:
\begin{equation}\label{eq:subsetb}
B:=\{E,\,H,\,g^{-1}H,\,Hg,\,g^{-1}Hg\}\cup J.
\end{equation}
Observe that $g^{-1}H\ne Hg$: indeed $g^{-1}H\cdot Hg=g^{-1}Hg$ while $Hg\cdot g^{-1}H=H$, so that the assumption $g^{-1}H=Hg$ forces the equality $g^{-1}Hg=H$ that contradicts the choice of $g$. It is easy to verify that $B$ is closed under union and element-wise multiplication of subsets so that $(B,\cup,\cdot)$ is a subsemiring in $(\mathcal{P}(G),\cup,\cdot)$. Define a map $B\to B_2^1$ via the following rule:
\begin{equation}
\label{eq:map}
\begin{tabular}{cccccc}
$A\in J$ & $E$ & $Hg$ & $g^{-1}H$ & $H$ & $g^{-1}Hg$\\
$\downarrow$ & $\downarrow$ & $\downarrow$ & $\downarrow$ & $\downarrow$ & $\downarrow$ \\
$\left(\begin{matrix} 0&0\\0&0\end{matrix}\right)$
&
$\left(\begin{matrix} 1&0\\0&1\end{matrix}\right)$
&
$\left(\begin{matrix} 0&1\\0&0\end{matrix}\right)$
&
$\left(\begin{matrix} 0&0\\1&0\end{matrix}\right)$
&
$\left(\begin{matrix} 1&0\\0&0\end{matrix}\right)$
&
$\left(\begin{matrix} 0&0\\0&1\end{matrix}\right)$
\end{tabular}.
\end{equation}
In can be routinely verified that this map is a homomorphism from the subsemiring $(B,\cup,\cdot)$  onto the \ais\ $\mathcal{B}_2^1$. Since $\mathcal{B}_2^1$ is homomorphic image of a subsemiring in $(\mathcal{P}(G),\cup,\cdot)$, all identities of $(\mathcal{P}(G),\cup,\cdot)$ hold in $\mathcal{B}_2^1$. Thus, Theorem~\ref{thm:main} applies to $(\mathcal{P}(G),\cup,\cdot)$, showing that its identities admit no finite basis.

\medskip

If $\mathcal{G}$ is a Dedekind group, then since $\mathcal{G}$ is non-abelian, it has a subgroup isomorphic to the 8-element quaternion group; see \cite[Theorem 12.5.4]{Hall:1959}. This subgroup is isomorphic to a subgroup in $(\mathcal{P}(G),\cdot)$, and by~\cite[Theorem~6.1]{JackRenZhao} the identities of every \ais\ whose multiplicative reduct possesses a non-abelian nilpotent subgroup admit no finite basis. We see that $(\mathcal{P}(G),\cup,\cdot)$ has no finite identity basis in this case as well.
\end{proof}

\begin{remark}
Given a semigroup $(S,\cdot)$, one sometimes considers, along with the power semigroup $(\mathcal{P}(S),\cdot)$, its subsemigroup $(\mathcal{P}'(S),\cdot)$ where $\mathcal{P}'(S)$ is the set of all \emph{non-empty} subsets of $S$; see, e.g., \cite[Chapter 11]{Almeida:95}. Clearly, the empty set serves as a zero in power semigroups so that one can think of $(\mathcal{P}(S),\cdot)$ as the result of adjoining a new zero to $(\mathcal{P}'(S),\cdot)$.
It is well known that the property of a semigroup to have or not to have a finite identity basis does not change when a new zero is adjoined; see \cite[Section 3]{Vo01}. Therefore $(\mathcal{P}(S),\cdot)$ and $(\mathcal{P}'(S),\cdot)$ admit or do not admit a finite identity basis simultaneously.

Since the set $\mathcal{P}'(S)$ is closed under union, one can consider the semiring $(\mathcal{P}'(S),\cup,\cdot)$. The power semiring $(\mathcal{P}(S),\cup,\cdot)$ can be thought of as the result of adjoining a new element, which is both an additive identity and multiplicative zero, to $(\mathcal{P}'(S),\cup,\cdot)$. However, it is not yet known whether adjoining such an element preserves the property of having/not having a finite identity basis.
Nevertheless, the above proof of Theorem~\ref{thm:psg} applies without a hitch to \ais{}s of the form $(\mathcal{P}'(G),\cup,\cdot)$ where $(G,\cdot)$ is any finite non-abelian solvable group and shows that the identities of every such \ais\ admit no finite basis.
\end{remark}

\begin{remark}
It is quite natural to consider $(\mathcal{P}(S),\cup,\cdot,\varnothing)$ as an algebra of type $(2,2,0)$; this was actually the setting adopted by Dolinka~\cite{Dolinka09a}. One can verify that the conclusion of  Theorem~\ref{thm:psg} persists in this case as well. The `non-Dedekind' part of the proof of Theorem~\ref{thm:psg} invokes our results from \cite{GV22} via Theorem~\ref{thm:main}. These results work fine for \ais{}s with multiplicative zero treated as algebras of type $(2,2,0)$; this is explicitly registered in \cite[Remark 3]{GV22}. In the part of the proof that deals with Dedekind groups, one should refer to Theorem~7.6 from \cite{JackRenZhao} that transfers the result of~\cite[Theorem~6.1]{JackRenZhao} to the signature $\{+,\cdot,0\}$.
\end{remark}

\begin{proof}[Proof of Theorem~\ref{thm:hall}]
The theorem claims that the identities of the semiring $(H(X),\cup,\cdot)$ of all Hall relations on a finite set $X$ admit a finite basis if and only if $|X|=1$.  The `if' part is obvious, since if $|X|=1$, then $|H(X)|=1$, and the identities of the trivial semiring have a basis consisting of the single identity $x=y$.

For the `only if' part, assume that $X=X_n:=\{1,2,\dots,n\}$ and $n\ge2$. The semigroup $(H(X_n),\cdot)$ is a block-group; see, e.g., \cite[Proposition 2]{GV21}. It can be easily verified that the \ais\ $\mathcal{B}_2^1$ embeds into $(H(X_2),\cup,\cdot)$ via the following map:
\[
\begin{tabular}{cccccc}
$\left(\begin{matrix} 0&0\\0&0\end{matrix}\right)$
&
$\left(\begin{matrix} 1&0\\0&1\end{matrix}\right)$
&
$\left(\begin{matrix} 0&1\\0&0\end{matrix}\right)$
&
$\left(\begin{matrix} 0&0\\1&0\end{matrix}\right)$
&
$\left(\begin{matrix} 1&0\\0&0\end{matrix}\right)$
&
$\left(\begin{matrix} 0&0\\0&1\end{matrix}\right)$\\
$\downarrow$ & $\downarrow$ & $\downarrow$ & $\downarrow$ & $\downarrow$ & $\downarrow$ \\
$\left(\begin{matrix} 1&1\\1&1\end{matrix}\right)$
&
$\left(\begin{matrix} 1&0\\0&1\end{matrix}\right)$
&
$\left(\begin{matrix} 0&1\\1&1\end{matrix}\right)$
&
$\left(\begin{matrix} 1&1\\1&0\end{matrix}\right)$
&
$\left(\begin{matrix} 1&0\\1&1\end{matrix}\right)$
&
$\left(\begin{matrix} 1&1\\0&1\end{matrix}\right)$
\end{tabular}.
\]
The matrices in the bottom row are matrices over the Boolean semiring $\mathbb{B}:=(\{0,1\},+,\cdot)$ with $0\cdot 0=0\cdot 1=1\cdot 0=0+0=0$, $1\cdot 1=1+0=0+1=1+1=1$, and we mean the standard representation of binary relations as matrices over the Boolean semiring (in which Hall relations correspond to matrices with permanent~1). Obviously, the \ais\ $(H(X_2),\cup,\cdot)$ embeds into $(H(X_n),\cup,\cdot)$ for each $n\ge 2$, whence so does $\mathcal{B}_2^1$. Therefore all identities of $(H(X_n),\cup,\cdot)$ hold in $\mathcal{B}_2^1$.

Let $\mathrm{Sym}_n$ stand for the group of all permutations of the set $X_n$. It is known that all subgroups of the semigroup of all binary relations on $X_n$ embed into $\mathrm{Sym}_n$; see, e.g., \cite[Theorem 3.5]{MP69}.
In particular, all subgroups of the semigroup $(H(X_n),\cdot)$ with $n\le 4$ embed into $\mathrm{Sym}_4$, and hence, are solvable. We see that  Theorem~\ref{thm:main} applies to the \ais\ $(H(X_n),\cup,\cdot)$ with $n=2,3,4$, whence the identities of the \ais\ admit no finite basis.

To cover the case $n>4$, observe that the semigroup $(H(X_n),\cdot)$ has $\mathrm{Sym}_n$ as its group of units, and for $n>4$ (even for $n\ge4$), this group possesses non-abelian nilpotent subgroups, for instance, the 8-element dihedral group. By~\cite[Theorem~6.1]{JackRenZhao} the \ais\ $(H(X_n),\cup,\cdot)$ has no finite identity basis in this case as well.
\end{proof}

In the introduction, we mentioned Dolinka's problem \cite[Problem 6.5]{Dolinka09a} as the main motivation of our study. Recall that the problem asks for a description of finite groups whose power groups have no finite identity basis. Combining Theorem~\ref{thm:psg} of the present paper and Theorem~6.1 of \cite{JackRenZhao} reduces the problem to two special cases.

First, the problem remains open for non-solvable groups whose nilpotent subgroups are abelian. The structure of such groups is well understood, see~\cite{Br71}; an interesting example here is the sporadic simple group of order 175560 found by Janko~\cite{Ja66}. While arguments in the proof of Theorem~\ref{thm:psg} do not cover this case, we conjecture that the conclusion of the theorem persists. In other words, we believe that the power group of any finite non-abelian group has no finite identity basis.

Second, the problem is open for a majority of abelian groups. This may seem surprising as power groups of abelian groups have commutative multiplication, but finite \ais{}s\ with commutative multiplication and without finite identity basis are known to exist, see \cite[Corollary 4.11]{JackRenZhao}. This does not exclude, of course, that all power groups of finite abelian groups admit a finite identity basis, but to the best of our knowledge, the only group for which such a basis is known is the 2-element group. Namely, Ren and Zhao~\cite{RZ} have shown that the identities $xy=yx$ and $x=x^3$ form a basis for identities of a 4-element \ais\ that can be easily identified with the power group of the 2-element group.

\section{Proof of Theorem~\ref{thm:inv-psg}}
\label{sec:involution}

Recall that in an inverse semigroup $(S,\cdot)$, every element has a unique inverse; the inverse of $a\in S$ is denoted $a^{-1}$. This defines a unary operation on $S$, and it is well known (and easy to verify) that
$(S,\cdot,{}^{-1})$ is an involution semigroup. In particular, the 6-element Brandt monoid can be considered as an involution semigroup. (Observe that in terms of the matrix representation \eqref{eq:brandt} the involution of $(B_2^1,\cdot,{}^{-1})$ is just the usual matrix transposition.)

We need the following result that translates Theorem~\ref{thm:old} into the language of involution semigroups. Its proof is parallel to that of \cite[Theorem 4.2]{GV22}.

\begin{theorem}
\label{thm:inv}
Let $\mathcal{S}=(S,\cdot,{}^\ast)$ be an involution semigroup whose multiplicative reduct satisfies the identities $\mathbf v_{n,q}^{(r)} = (\mathbf v_{n,q}^{(r)})^2$ for all $n\ge 2$ and some $q,r\ge 1$. If all identities of $\mathcal{S}$ hold in the involution semigroup $(B_2^1,\cdot,{}^{-1})$, then the identities admit no finite basis.
\end{theorem}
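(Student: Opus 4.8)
The plan is to mimic the structure of the proof of Theorem~\ref{thm:old} (that is, \cite[Theorem 4.2]{GV22}), replacing semiring-theoretic arguments with their involution-semigroup counterparts wherever the additive structure was used. The overall strategy of such finite basis results is to exhibit, for each $N$, an identity $\alpha_N = \beta_N$ that holds in $(B_2^1,\cdot,{}^{-1})$ (and hence in $\mathcal{S}$, since all identities of $\mathcal{S}$ hold in $(B_2^1,\cdot,{}^{-1})$) but cannot be derived from any finite subset of the identities valid in $\mathcal{S}$. The hypothesis that the multiplicative reduct satisfies $\mathbf v_{n,q}^{(r)} = (\mathbf v_{n,q}^{(r)})^2$ for all $n\ge 2$ is precisely what is needed to build these witness identities: increasing $n$ produces identities involving more variables than any fixed finite basis can control, while the idempotency $\mathbf v = \mathbf v^2$ lets one iterate/substitute the words $\mathbf v_{n,q}^{(r)}$ freely.

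First I would isolate exactly where the proof of \cite[Theorem 4.2]{GV22} uses the semiring addition $+$. In that setting one typically forms terms like sums of the words $\mathbf v_{n,q}^{(r)}$ under various substitutions and argues that a finite basis, having bounded variable content, must equate two such terms that are in fact separated by the target algebra. The key replacement is that the involution $x\mapsto x^*$ must now play the role previously played by $+$ in generating the distinguishing identities. The natural substitute is to use the involution to pair each word $\mathbf v_{n,q}^{(r)}$ with its starred mirror $(\mathbf v_{n,q}^{(r)})^*$, exploiting the antiautomorphism law $(xy)^* = y^* x^*$ and $(x^*)^* = x$ to reverse and transpose words. On $(B_2^1,\cdot,{}^{-1})$ the involution is matrix transposition, so the interplay between a matrix unit $(i,g,j)$ and its transpose $(j,g^{-1},i)$ is what encodes the combinatorial information that the critical identities detect.

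The concrete steps I would carry out are as follows. Step one: transport the entire scheme of critical identities from \cite{GV22} into the involution signature, defining for each $N$ an explicit pair of words $\alpha_N, \beta_N$ over the signature $\{\cdot, {}^*\}$ built from $\mathbf v_{n,q}^{(r)}$ and its starred variant. Step two: verify $(B_2^1,\cdot,{}^{-1}) \models \alpha_N = \beta_N$ by direct computation with the six matrices, using transposition as the involution; this is a finite check analogous to the semiring verification and should go through with the same case analysis. Step three: show that any finite set $\Sigma$ of identities valid in $\mathcal{S}$ fails to derive $\alpha_N = \beta_N$ once $N$ exceeds a bound depending on $\Sigma$. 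This is the deduction-theoretic heart of the argument and will reuse the machinery of \cite{GV22}: one builds a model (or a deduction-length / variable-counting obstruction) witnessing that $\Sigma$ cannot bridge $\alpha_N$ and $\beta_N$, invoking $\mathbf v_{n,q}^{(r)} = (\mathbf v_{n,q}^{(r)})^2$ to guarantee that substituting these idempotent words yields the required rigidity.

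The main obstacle I anticipate is Step three, specifically checking that the non-derivability argument of \cite{GV22} survives the change of signature. In the semiring case the distinguishing construction relied on the additive idempotent structure to form and compare terms, and on the interaction of $+$ with the multiplicative identities $\mathbf v = \mathbf v^2$; one must confirm that the involution provides enough expressive power to replicate this, and that no identity of $(B_2^1,\cdot,{}^{-1})$ inadvertently collapses the witnesses that distinguished the semiring identities. Concretely, the delicate point is ensuring that the starred words behave as independent generators of new distinctions rather than being forced equal by the involution laws together with $\Sigma$; this is where one must carefully track how $(xy)^* = y^*x^*$ reverses factor order inside $\mathbf v_{n,q}^{(r)}$ and confirm that the resulting reversed words are not identified in the target involution semigroup. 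Since the paper states the proof is ``parallel'' to that of \cite[Theorem 4.2]{GV22}, I expect the adaptation to be faithful step-by-step, with the transposition structure of $(B_2^1,\cdot,{}^{-1})$ supplying exactly the combinatorial separation that addition supplied before.
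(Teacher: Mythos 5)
There is a genuine gap, and it sits exactly where your plan has to produce actual content. First, your setup contains a logical reversal: you propose witness identities $\alpha_N=\beta_N$ that ``hold in $(B_2^1,\cdot,{}^{-1})$ (and hence in $\mathcal{S}$, since all identities of $\mathcal{S}$ hold in $(B_2^1,\cdot,{}^{-1})$)''. The hypothesis says $\mathrm{Id}(\mathcal{S})\subseteq\mathrm{Id}(B_2^1,\cdot,{}^{-1})$, so satisfaction transfers \emph{from} $\mathcal{S}$ \emph{to} $B_2^1$, never the other way; an identity verified by computation in the six matrices need not hold in $\mathcal{S}$ at all. Moreover, the witnesses in the paper's proof are not new involution identities assembled from starred copies of the $\mathbf v$-words: they are the purely multiplicative identities $\mathbf v_{k,q}^{(r)}=(\mathbf v_{k,q}^{(r)})^2$, which hold in $\mathcal{S}$ by the hypothesis on its multiplicative reduct. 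So your Steps one and two aim at the wrong objects, and your central worry --- whether the involution can ``replace'' the semiring addition in building distinguishing terms --- is moot, because no such replacement is needed anywhere.

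Second, and more seriously, your Step three (non-derivability) is stated as a goal, not an argument: ``one builds a model (or a deduction-length / variable-counting obstruction)'' names no construction. The paper's proof is semantic and hinges on an external family of models you never mention: Ka\softd{}ourek's inverse semigroups $(S_n^{(h)},\cdot,{}^{-1})$ of partial injections. Their key locality property \cite[Corollary~3.2]{Kad03} states that every inverse subsemigroup of $(S_k^{(r)},\cdot,{}^{-1})$ generated by fewer than $k$ elements satisfies all identities of $(B_2^1,\cdot,{}^{-1})$. Given a hypothetical basis $\Sigma$ of $\mathrm{Id}(\mathcal{S})$ in which every identity uses fewer than $k$ variables, each identity of $\Sigma$ holds in $(B_2^1,\cdot,{}^{-1})$ (this is the correct direction of the hypothesis), hence under every substitution into $S_k^{(r)}$ its two sides land in a subsemigroup where it is valid; thus all of $\Sigma$, and therefore all of $\mathrm{Id}(\mathcal{S})$, holds in $(S_k^{(r)},\cdot,{}^{-1})$. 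In particular $\mathbf v_{k,q}^{(r)}=(\mathbf v_{k,q}^{(r)})^2$ would hold there --- but it fails in $(S_k^{(r)},\cdot)$ by \cite[Proposition~3.3]{GV22}. That contradiction is the whole proof. Without the Ka\softd{}ourek family (or a substitute family enjoying both the locality property relative to $(B_2^1,\cdot,{}^{-1})$ and the violation of the $\mathbf v$-identities), your outline cannot be completed.
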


\begin{proof}
As in the proof of  \cite[Theorem 4.2]{GV22}, we employ a family of inverse semigroups constructed by Ka\softd{}ourek in~\cite[Section~2]{Kad03}. First, for all $n,h\ge 1$, define terms $\mathbf w_n^{(h)}$ of the signature $(\cdot,{}^{-1})$ over the alphabet $X_n^{(h)}=\{x_{i_1i_2\cdots i_h}\mid i_1,i_2,\dots,i_h\in\{1,2,\dots,n\}\}$ by induction on $h$. Put
\[
\mathbf w_n^{(1)}:=x_1x_2\cdots x_n\,x_1^{-1}x_2^{-1}\cdots x_n^{-1}.
\]
Then, assuming that, for any $h>1$, the unary term $\mathbf w_n^{(h-1)}$ over the alphabet $X_n^{(h-1)}$ has already been defined, we create $n$ copies of this term over the alphabet $X_n^{(h)}$ as follows. For every $j\in\{1,2,\dots,n\}$, we put
\[
\mathbf w_{n,j}^{(h-1)}:=\sigma_{n,j}^{(h)}(\mathbf w_n^{(h-1)}),
\]
where the substitution $\sigma_{n,j}^{(h)}\colon X_{n}^{(h-1)}\to X_{n}^{(h)}$ appends $j$ to the indices of its arguments, i.e.,
\[
\sigma_{n,j}^{(h)}(x_{i_1i_2\dots i_{h-1}}):= x_{i_1i_2\dots i_{h-1}j} \text{ for all } i_1,i_2,\dots,i_{h-1}\in\{1,2,\dots,n\}.
\]
Then we put
\[
\mathbf w_n^{(h)}:=\mathbf w_{n,1}^{(h-1)}\mathbf w_{n,2}^{(h-1)}\cdots \mathbf w_{n,n}^{(h-1)}(\mathbf w_{n,1}^{(h-1)})^{-1}(\mathbf w_{n,2}^{(h-1)})^{-1}\cdots (\mathbf w_{n,n}^{(h-1)})^{-1}.
\]
Now for any $n\ge2$ and $h\ge1$, let $(S_n^{(h)},\cdot,{}^{-1})$ stand for the inverse semigroup of partial one-to-one transformations on the set $\{0,1,\dots,2^hn^h\}$ generated by $n^h$ transformations $\chi_{i_1i_2\dots i_h}$ with arbitrary indices $i_1,i_2,\dots, i_h\in\{1,2,\dots,n\}$ defined as follows:
\begin{itemize}
\item $\chi_{i_1i_2\dots i_h}(p-1)=p$ if and only if in the term $\mathbf w_n^{(h)}$, the element in the $p$-th position from the left is $x_{i_1i_2\dots i_h}$;
\item $\chi_{i_1i_2\dots i_h}(p)=p-1$ if and only if in the term $\mathbf w_n^{(h)}$, the element in the $p$-th position from the left is $x_{i_1i_2\dots i_h}^{-1}$.
\end{itemize}

We borrow the following illustrative example from~\cite{GV22}. Let $n=2$, $h=2$; then
\[
\mathbf w_2^{(2)}=\underbrace{x_{11}x_{21}x_{11}^{-1}x_{21}^{-1}}_{\mathbf w_{2,1}^{(1)}}\ \underbrace{x_{12}x_{22}x_{12}^{-1}x_{22}^{-1}}_{\mathbf w_{2,2}^{(1)}}\ \underbrace{x_{21}x_{11}x_{21}^{-1}x_{11}^{-1}}_{(\mathbf w_{2,1}^{(1)})^{-1}}\ \underbrace{x_{22}x_{12}x_{22}^{-1}x_{12}^{-1}}_{(\mathbf w_{2,2}^{(1)})^{-1}},
\]
and the generators of the inverse semigroup $(S_2^{(2)},\cdot,{}^{-1})$ act as shown in Figure~\ref{fig:kadourek}.
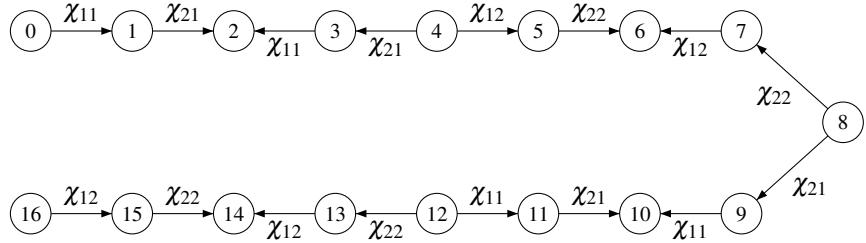
\begin{figure}[h]
\begin{center}
\unitlength 1.35mm
\begin{picture}(80,25)(0,-2.5)
\gasset{Nw=4,Nh=4,Nmr=2}
\node(u0)(0,18){\footnotesize 0}
\node(u1)(10,18){\footnotesize 1}
\node(u2)(20,18){\footnotesize 2}
\node(u3)(30,18){\footnotesize 3}
\node(u4)(40,18){\footnotesize 4}
\node(u5)(50,18){\footnotesize 5}
\node(u6)(60,18){\footnotesize 6}
\node(u7)(70,18){\footnotesize 7}
\node(u8)(80,9){\footnotesize 8}
\node(u9)(70,0){\footnotesize 9}
\node(u10)(60,0){\footnotesize 10}
\node(u11)(50,0){\footnotesize 11}
\node(u12)(40,0){\footnotesize 12}
\node(u13)(30,0){\footnotesize 13}
\node(u14)(20,0){\footnotesize 14}
\node(u15)(10,0){\footnotesize 15}
\node(u16)(0,0){\footnotesize 16}
\drawedge(u0,u1){$\chi_{11}$}
\drawedge(u1,u2){$\chi_{21}$}
\drawedge(u3,u2){$\chi_{11}$}
\drawedge(u4,u3){$\chi_{21}$}
\drawedge(u4,u5){$\chi_{12}$}
\drawedge(u5,u6){$\chi_{22}$}
\drawedge(u7,u6){$\chi_{12}$}
\drawedge(u8,u7){$\chi_{22}$}
\drawedge(u8,u9){$\chi_{21}$}
\drawedge(u9,u10){$\chi_{11}$}
\drawedge(u11,u10){$\chi_{21}$}
\drawedge(u12,u11){$\chi_{11}$}
\drawedge(u12,u13){$\chi_{22}$}
\drawedge(u13,u14){$\chi_{12}$}
\drawedge(u15,u14){$\chi_{22}$}
\drawedge(u16,u15){$\chi_{12}$}
\end{picture}
\caption{The generators of the inverse semigroup $(S_2^{(2)},\cdot,{}^{-1})$}\label{fig:kadourek}
\end{center}
\end{figure}

We need two properties of the inverse semigroups $(S_n^{(h)},\cdot,{}^{-1})$.
\begin{itemize}
  \item[(A)] \cite[Corollary~3.2]{Kad03} For each $n\ge 2$ and $h\ge 1$, any inverse subsemigroup of $(S_n^{(h)},\cdot,{}^{-1})$ generated by less than $n$ elements satisfies all identities of the involution semigroup $(B_2^1,\cdot,{}^{-1})$.
  \item[(B)] \cite[Proposition 3.3]{GV22} For each $n\ge 2$ and $h\ge 1$ and any $m\ge 1$, the semigroup $(S_n^{(h)},\cdot)$ violates the identity $\mathbf v_{n,m}^{(h)}=(\mathbf v_{n,m}^{(h)})^2$.
\end{itemize}

Now, arguing by contradiction, assume that for some $k$ the involution semigroup $\mathcal{S}$ has an identity basis $\Sigma$ such that each identity in $\Sigma$ involves less than $k$ variables. Fix an arbitrary identity $\mathbf u=\mathbf v$ in $\Sigma$ and let $x_1,x_2,\dots,x_\ell$ be all variables that occur in $\mathbf u$ or $\mathbf v$. Consider an arbitrary substitution $\tau\colon\{x_1,x_2,\dots,x_\ell\}\to S_k^{(r)}$ and let $(T,\cdot,{}^{-1})$ be the inverse subsemigroup of $(S_k^{(r)},\cdot,{}^{-1})$ generated by the elements $\tau(x_1),\tau(x_2),\dots,\tau(x_\ell)$. Since $\ell<k$, Property (A) implies that $(T,\cdot,{}^{-1})$ satisfies all identities of the involution semigroup $(B_2^1,\cdot,{}^{-1})$. Since by the condition of the theorem, $(B_2^1,\cdot,{}^{-1})$ satisfies all identities of $\mathcal{S}$, the identity $\mathbf u=\mathbf v$ holds in $(B_2^1,\cdot,{}^{-1})$. Hence the identity $\mathbf u=\mathbf v$ holds also in the involution semigroup $(T,\cdot,{}^{-1})$, and so $\mathbf u$ and $\mathbf v$ take the same value under every substitution of elements of $T$ for the variables $x_1,\dots,x_\ell$. In particular, $\tau(\mathbf u)=\tau(\mathbf v)$. Since the substitution $\tau$ is arbitrary, the identity $\mathbf u=\mathbf v$ holds in the involution semigroup $(S_k^{(r)},\cdot,{}^{-1})$. Since $\mathbf u=\mathbf v$ is an arbitrary identity from the identity basis $\Sigma$ of $\mathcal{S}$, we see that $(S_k^{(r)},\cdot,{}^{-1})$ satisfies all identities of $\mathcal{S}$. In particular,  the multiplicative reduct $(S_k^{(r)},\cdot)$ satisfies all identities of the multiplicative reduct $(S,\cdot)$ of $\mathcal{S}$. By the condition of the theorem, $(S,\cdot)$ satisfies the identity $\mathbf v_{k,q}^{(r)} = (\mathbf v_{k,q}^{(r)})^2$. On the other hand, Property (B) shows that this identity fails in $(S_k^{(r)},\cdot)$, a contradiction.
\end{proof}

Now, we deduce Theorem~\ref{thm:inv-psg} from Theorem~\ref{thm:inv}, applying exactly the same construction that was used in the `non-Dedekind' part of the proof of Theorem~\ref{thm:psg}.

\begin{proof}[Proof of Theorem~\ref{thm:inv-psg}] We aim to prove that for any finite non-Dedekind solvable group $\mathcal{G}=(G,\cdot)$, the identities of the involution semigroup $(\mathcal{P}(G),\cdot,{}^{-1})$ admit no finite basis. As observed in the first paragraph of the proof of of Theorem~\ref{thm:psg}, the multiplicative reduct $(\mathcal{P}(G),\cdot)$ is a block-group with solvable subgroups. By Proposition~\ref{prop:finbg}, the reduct satisfies the identities $\mathbf v_{n,q}^{(r)} = (\mathbf v_{n,q}^{(r)})^2$ for all $n\ge 2$ and some $q,r\ge 1$.

As the group $(G,\cdot)$ is not Dedekind, one can choose a subgroup $(H,\cdot)$ of $\mathcal{G}$ and an element $g\in G$ such that $g^{-1}Hg\ne H$. Consider the subset $B$ of $\mathcal{P}(G)$ defined by  \eqref{eq:subsetb}.
Applying the element-wise inversion to the elements of $B$, we see that
\[
E^{-1}=E,\ H^{-1}=H,\ (g^{-1}H)^{-1}=Hg,\ (Hg)^{-1}=g^{-1}H,\ (g^{-1}Hg)^{-1}=g^{-1}Hg,
\]
and $A^{-1}\in J$ for all $A\in J$. Hence $(B,\cdot,{}^{-1})$ is an involution subsemigroup of $(\mathcal{P}(G),\cdot,{}^{-1})$ and the map \eqref{eq:map} is a homomorphism of this involution subsemigroup onto  $(B_2^1,\cdot,{}^{-1})$. Hence, all identities of $(\mathcal{P}(G),\cdot,{}^{-1})$ hold in the involution semigroup $(B_2^1,\cdot,{}^{-1})$. Now Theorem~\ref{thm:inv} applies to $(\mathcal{P}(G),\cdot,{}^{-1})$ and shows that its identities have no finite basis.
\end{proof}

\begin{remark}
The analogy between Theorems~\ref{thm:psg} and~\ref{thm:inv-psg} is not complete since the latter theorem does not cover finite groups that are Dedekind but non-abelian. In particular, we do not know whether the involution power semigroup of the 8-element quaternion group admits a finite identity basis.
\end{remark}

\begin{remark}
The semigroup of all binary relations on a set $X$ has a useful involution: for each $\rho\subseteq X\times X$, one sets $\rho^{-1}:=\{(y,x)\mid (x,y)\in\rho\}$. The set $H(X)$ of all Hall relations on $X$ is closed under this involution so that one can consider the involution semigroup $(H(X),\cdot,{}^{-1})$. The reader might wonder if a result related to Theorem~\ref{thm:hall} as Theorem~\ref{thm:inv-psg} relates to Theorem~\ref{thm:psg} holds, and if it does, why it is missing in this paper. In fact, the exact analog of Theorem~\ref{thm:hall} does hold: the identities of the involution semigroup of all Hall relations on a finite set $X$ admit a finite basis if and only if $|X|=1$. This follows from~\cite[Remark 3.6]{ADV12} that establishes a stronger fact that we briefly describe now.

Recall that an algebra of some type $\tau$ is said to be \emph{weakly finitely based} if it fulfills a finite set $\Sigma$ of identities such that every finitely generated algebra of type $\tau$ that satisfies $\Sigma$ is finite. An algebra that is not weakly finitely based is called \emph{inherently nonfinitely based}. In~\cite[Remark 3.6]{ADV12} it is shown that the involution semigroup $(H(X),\cdot,{}^{-1})$ with $|X|>1$ is inherently nonfinitely based. This property is known to be much stronger than the absence of a finite identity basis and it cannot be achieved via the approach of the present paper.

In contrast, Dolinka~\cite[Theorem~6]{Dolinka10a} has proved that for every finite group $(G,\cdot)$, the involution semigroup $(\mathcal{P}(G),\cdot,{}^{-1})$ is weakly finitely based. It means that the Finite Basis Problem for involution power semigroups of groups is out of the range of methods developed in \cite{Dolinka10,ADV12}, and therefore, this justifies including Theorem~\ref{thm:inv-psg}.
\end{remark}

\begin{remark}
An \emph{involution \ais} is an algebra $(S, +, \cdot,{}^*)$ of type $(2,2,1)$  such that $(S, +, \cdot)$ is an \ais, $(S,\cdot,{}^*)$ is an involution semigroup and, besides that, the law
\[
(x+y)^*=x^*+y^*
\]
holds. It is easy to see that $(\mathcal{P}(G),\cup,\cdot,{}^{-1})$ is an involution \ais\ for any group $\mathcal{G}=(G,\cdot)$. The above proofs of Theorems~\ref{thm:psg} and~\ref{thm:inv-psg} can be fused to show that this involution \ais\ has no finite identity basis whenever the group $\mathcal{G}$ is finite, solvable and non-Dedekind.
\end{remark}

\end{document}